\newcommand{\indicator}[1]{\ensuremath{\mathbf{1}_{\{#1\}}}}
\numberwithin{equation}{section}
\newcommand{\E}{\mathbb{E}}
\newcommand{\Prob}{\mathbb{P}}
\newcommand{\C}{\mathbb{C}}
\newcommand{\R}{\mathbb{R}}
\newcommand{\var}{\mathbb{V}}
\newcommand{\V}{\mathbb{V}}
\newcommand{\cov}{\mathrm{Cov}}
\newcommand{\Tr}{\mathrm{Tr}}
\newcommand{\tr}{\mathrm{tr}}
\renewcommand{\d}{\partial}
\renewcommand\Re{\operatorname{Re}}
\renewcommand\Im{\operatorname{Im}}
\theoremstyle{plain}
  \newtheorem{theorem}{Theorem}[section]
  \newtheorem{proposition}[theorem]{Proposition}
  \newtheorem{lemma}[theorem]{Lemma}
\theoremstyle{definition}
  \newtheorem{definition}[theorem]{Definition}
  \newtheorem{remark}[theorem]{Remark}
\begin{document}
\title[Fluctuations of Matrix Entries]{Fluctuations of Matrix Entries of Regular Functions of Sample Covariance Random Matrices}

\author[S. O'Rourke]{Sean O'Rourke}
\address{Department of Mathematics, University of California, Davis, One Shields Avenue, Davis, CA 95616-8633  }
\thanks{S.O'R. has been supported in part by the NSF grants  VIGRE DMS-0636297 and DMS-1007558}
\email{sdorourk@math.ucdavis.edu}

\author[D. Renfrew]{David Renfrew} \thanks{D.R. has been supported in part by the NSF grants VIGRE DMS-0636297, DMS-1007558, and DMS-0905988 }
\address{Department of Mathematics, University of California, Davis, One Shields Avenue, Davis, CA 95616-8633  }
\email{drenfrew@math.ucdavis.edu}

\author[A. Soshnikov]{Alexander Soshnikov}
\address{Department of Mathematics, University of California, Davis, One Shields Avenue, Davis, CA 95616-8633  }
\thanks{A.S. has been supported in part by the NSF grant DMS-1007558}
\email{soshniko@math.ucdavis.edu}

\begin{abstract}
We extend the results \cite{PRS}, \cite{LytP}, \cite{ors} about the fluctuations of the matrix entries of regular functions of Wigner matrices to 
the case of sample covariance random matrices.
\end{abstract}

\maketitle

\section{Introduction and Main Results}

Recently, there have been a number of results concerning matrix entries of functions of random matrices.  That is, for a $N \times N$ random real 
symmetric (Hermitian) matrix, $M_N$, we consider the entries of the matrix $f(M_N)$ where $f$ is a regular test function.  

In \cite{LP}, Lytova and Pastur consider the case where $M_N$ is drawn from the Gaussian Orthogonal Ensemble (GOE) or Gaussian Unitary Ensemble (GUE).
We recall that a GOE matrix is defined as $M_N=\frac{1}{\sqrt{N}} \* (Y_N+Y_N^t),$ where the entries of $Y_N$ are i.i.d. $N(0, \frac{1}{2}\*\sigma^2)$ 
real random variables (see e.g. \cite{AGZ}).  In a similar way, a GUE matrix is defined as $M_N=\frac{1}{\sqrt{N}} \* (Y_N+Y_N^*),$ where 
the entries of $X_N$ are i.i.d. $N(0, \frac{1}{2}\*\sigma^2)$ complex random variables.
It was shown in \cite{LP} that
\begin{equation}
\label{lp1}
	\sqrt{N} \left( f(M_N)_{ij} - 
\E \left[ f(M_N)_{ij} \right] \right) \longrightarrow N \left( 0, \frac{1 + \delta_{ij}}{\beta} \omega^2(f) \right), 
\end{equation}
in the limit when the size of the matrix goes to infinity, 
where $\omega^2(f) = \var(f(\psi))$, $\psi$ is a random variable distrubited according to Wigner semicircle law, and 
$\beta=1$ for the GOE and 
$\beta=2$ for the GUE.   We recall that the Wigner semicircle distribution is supported on the interval
$[-2\*\sigma, 2\*\sigma]$ and its density with respect to the Lebesgue measure is given by
\begin{equation}
\label{polukrug}
\frac{d \mu_{sc}}{dx}(x) = \frac{1}{2 \pi \sigma^2} \sqrt{ 4 \sigma^2 - x^2} \mathbf{1}_{[-2 \sigma , 2 \sigma]}(x).
\end{equation}

In the case where $W_N=\frac{1}{\sqrt{N}}\*A_N,$ and $A_N$ is a symmetric (Hermitian) Wigner matrix (\cite{AGZ}, \cite{BG}) 
with i.i.d. (not necessarily Gaussian) entries up from the diagonal, 
Pizzo, Renfrew, and Soshnikov studied in \cite{PRS} the 
fluctuations of both the diagonal and off-diagonal entries under the condition that the off-diagonal entries of $A_N$ are centered and 
have finite fourth moment, and the diagonal entries of $A_N$ are centered and
have finite second moment.  The variance of the off-diagonal entries, as before, is equal to $\sigma^2.$ 
The test function $f$ has been assumed to be four times continuously differentiable.   
In particular, it is shown in \cite{PRS} that
\begin{equation*}
	\sqrt{N} \left( f(W_N)_{ij} - \E \left[ f(W_N)_{ij} \right] \right)
\end{equation*}
converges in distribution to the sum of two independent random variables: the first (up to scaling) is given by $(A_N)_{ij}$ and the second is a 
Gaussian random variable with mean zero and variance explicitely given in terms of the function $f$. In addition, it was proven in \cite{PRS} that 
the joint distribution of any finite number of normalized matrix entries converges to the product of one-dimensional limiting distributions. 
If the marginal 
distribution of the entries of $W_N$ is Gaussian (so $W_N$ belongs to the GOE (GUE) ensemble), one recovers (\ref{lp1}).

Such results might be considered as an analogue of the E.Borel theorem for the matrix entries of random matrices from the classical compact groups
(see e.g. \cite{DDN}, \cite{Jiang1}, and \cite{Jiang2}).  In addition, 
the results about the fluctuation of the resolvent quadratic form are related to the limiting distribution of the outliers in the spectrum
of finite rank deformations of Wigner matrices (see e.g. \cite{PRS1} and references therein).

Almost simultaneously with \cite{PRS} and using a different set of ideas, Pastur and Lytova \cite{LytP} 
gave another proof of the limiting distribution of the normalized diagonal entries 
\begin{equation*}
	\sqrt{N} \left( f(W_N)_{ii} - 
\E \left[ f(W_N)_{ii} \right] \right), \qquad 1 \leq i \leq N
\end{equation*}
when $W_N=\frac{1}{\sqrt{N}}\*A_N,$ and $A_N$ is a real symmetric Wigner matrix with i.i.d. centered entries up from the diagonal provided 
the cumulant generating function $\log(\E \exp(z\*A_{12}))$ is entire (so, in particular, all moments of the marginal distribution are finite) and
the test function $f$ satisfies
\begin{equation*}
	\int_\R (1 + 2|k|)^3 |\hat{f}(k)| dk < \infty
\end{equation*}
where $\hat{f}$ is the Fourier transform
\begin{equation} \label{eq:fourier}
	\hat{f}(k) = \frac{1}{\sqrt{2 \pi}} \int_\R e^{-ikx} f(x) dx.
\end{equation}

The results of \cite{PRS} and \cite{LytP} are extended in \cite{ors} to the case of a Wigner matrix with non-i.i.d. entries where it was assumed that 
the off-diagonal entries have 
uniformly bounded fourth moments, diagonal entries have uniformly bounded second moments, and certain Lindeberg type conditions for the fourth moments of 
the off-diagonal entries and the second moments of the diagonal entries are satisfied.
The test function $f(x)$ is assumed to satisfy
\begin{equation*}
	\int_\R (1 + 2|k|)^{2\*s} |\hat{f}(k)|^2 dk < \infty,
\end{equation*}
for some $s>3.$

In this paper, we study the fluctuations of matrix entries of a sample covariance random matrix.  
Namely, we consider the case where 
\begin{equation}
\label{triest}
M_N = \frac{1}{N} A_N A_N^\ast, 
\end{equation}
and $A_N$ is an $N \times n$ rectangular matrix with independent entries.  We begin with some definitions. 

\begin{definition} \label{def:C}
Let $A_N = \left( (A_N)_{ij} \right)_{1 \leq i \leq N; 1 \leq j \leq n}$ be an $N \times n$ matrix with complex entries.  
We say the matrix $A_N$ satisfies condition {\bf C1} if
\begin{enumerate}[(i)]
	\item $\{ \Re (A_N)_{ij}, \Im (A_N)_{ij}: 1 \leq i \leq N; 1 \leq j \leq n \}$ is a collection of independent random variables,
	\item each entry $(A_N)_{ij}$ has mean $0$ and variance $\sigma^2$,
	\item each entry satisifies $\E (A_N)_{i,j}^2 = 0$, 
	\item $\sup_{N,i,j} \E |(A_N)_{ij}|^4 = m_4 < \infty$,
	\item the entries satisfy the Lindeberg condition for the fourth moments, that is, for all $\epsilon >0$,
	\begin{equation} \label{eq:lf4}
		\frac{1}{N^2} \sum_{i,j} \E |(A_N)_{ij}|^4 \indicator{ |(A_N)_{ij}| > \epsilon \sqrt{N}} \longrightarrow 0
	\end{equation}
	as $N \rightarrow \infty$.
\end{enumerate}
\end{definition}

\begin{definition}
Let $A_N = \left( (A_N)_{ij} \right)_{1 \leq i \leq N; 1 \leq j \leq n}$ be an $N \times n$ matrix with real entries.  We say the matrix $A_N$ satisifies 
condition {\bf C2} if $\{ (A_N)_{ij}: 1 \leq i \leq N; 1 \leq j \leq n \}$ is a collection of independent real random variables and conditions (ii), 
(iv), and (v) hold from Definition \ref{def:C}.
\end{definition}

We define $X_N := \frac{1}{\sqrt{N}}A_N$ and $M_N := X_N X_N^\ast$.  Throughout this paper, we assume that 
$c_N := n/N \rightarrow c \in (0, \infty)$ as $N \rightarrow \infty$.  

\begin{definition}
Let $B$ be an $N \times N$ self-adjoint matrix with eigenvalues $\lambda_1, \ldots, \lambda_N$.  The empirical spectral density of $B$ is given by 
\begin{equation*}
	\mu_B := \frac{1}{N} \sum_{i =1}^N \delta_{\lambda_i}.
\end{equation*}
\end{definition}

The limiting empirical spectral density of $M_N$ is known as the Marchenko-Pastur Law (see \cite{bai-book}, \cite{mp}).

\begin{theorem}[Marchenko-Pastur] \label{thm:mp}
Suppose that for each $N$, the entries of $A_N$ are independent complex (real) 
random variables with mean $0$ and variance $\sigma^2$.  Assume $n/N \rightarrow 
c \in (0,\infty)$ and for any $\epsilon>0$
\begin{equation} \label{eq:lf}
	\frac{1}{N^2} \sum_{i,j} \E |(A_N)_{ij}|^2 \indicator{ |(A_N)_{ij}| > \epsilon \sqrt{N}} \longrightarrow 0
\end{equation}
as $N \rightarrow \infty$.  Then with probability one, the emperical density $\mu_{M_N}$ tends to the Marchenco-Pastur distribution, $\mu_{\sigma,c}$, 
with ratio index $c$ and scale index $\sigma^2$ where
\begin{equation*}
	\frac{d \mu_{\sigma,c}}{d x}(x) = \left\{  \begin{array}{lr} 
					\frac{1}{2 \pi x \sigma^2} \sqrt{(u_+ - x)(x - u_-)}, & u_- \leq x \leq u_+, \\
					0, & \text{otherwise},
					\end{array} \right.
\end{equation*}
with a point mass at $0$ with weight $(1-c)$ when $c < 1$, and where
\begin{align*}
	u_+ &:= \sigma^2(1 + \sqrt{c})^2, \\
	u_- &:= \sigma^2(1 - \sqrt{c})^2.
\end{align*}
\end{theorem}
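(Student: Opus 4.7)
The plan is to work at the level of the Stieltjes transform $s_N(z) := \frac{1}{N}\tr(M_N - zI)^{-1}$ for $z \in \C^+$, show that $s_N(z)$ converges almost surely, for each such $z$, to the Stieltjes transform $s(z)$ of $\mu_{\sigma,c}$, and then invoke the standard equivalence between pointwise convergence of Stieltjes transforms on $\C^+$ and weak convergence of the associated probability measures. The limit $s(z)$ can be characterized as the unique solution with $\Im s(z)>0$ of the self-consistent equation
\begin{equation*}
c\sigma^2 z\, s(z)^2 - \bigl(\sigma^2(1-c) - z\bigr) s(z) + 1 = 0,
\end{equation*}
which one checks directly is satisfied by the Stieltjes transform of the density in the statement (together with the atom at $0$ of mass $1-c$ when $c<1$).

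The first step would be a truncation and centering reduction. Pick a threshold $\epsilon_N \sqrt{N}$ with $\epsilon_N \downarrow 0$ slowly enough that the Lindeberg condition \eqref{eq:lf} still forces the variance contribution from the truncated tails to vanish, and replace $A_N$ by the matrix $\tilde A_N$ of truncated and recentered entries. Applied to the difference $M_N - \tilde M_N$, the Hoffman--Wielandt inequality (or a direct rank bound) shows that the Kolmogorov distance between $\mu_{M_N}$ and $\mu_{\tilde M_N}$ tends to $0$ almost surely. This reduces the problem to the case of uniformly bounded entries with moments of all orders.

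The central step is the derivation of the self-consistent equation. I would pass to the $n \times n$ companion $\check M_N := \frac{1}{N}A_N^\ast A_N$, which shares its nonzero spectrum with $M_N$ via the elementary identity $n\,\check s_N(z) = N s_N(z) + (N-n)/z$, and write $\check G(z) := (\check M_N - zI)^{-1}$. The Schur complement identity applied to the $j$-th row and column yields
\begin{equation*}
\check G_{jj}(z) = \frac{1}{-z - \frac{1}{N}(A_N)_{\cdot j}^\ast \bigl(M_N^{(j)} - zI\bigr)^{-1} (A_N)_{\cdot j}},
\end{equation*}
where $M_N^{(j)}$ is built from $A_N$ with column $j$ removed, and is independent of $(A_N)_{\cdot j}$. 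Conditionally on $M_N^{(j)}$, the quadratic form in the denominator concentrates around $\frac{\sigma^2}{N}\tr(M_N^{(j)} - zI)^{-1}$, which in turn differs from $\sigma^2 s_N(z)$ by $O(1/N)$ via the rank-one interlacing bound and the companion relation above. Summing $\check G_{jj}$ over $j$ and passing to the limit produces the self-consistent equation for any subsequential limit of $\E s_N(z)$; uniqueness of the solution in the upper half plane pins down the limit.

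The main technical obstacle is twofold. First, one must control the quadratic-form fluctuations in probability: this is achieved by a Hanson--Wright type concentration estimate, applied after the truncation, with deterministic matrix $(M_N^{(j)}-zI)^{-1}$ of spectral norm at most $1/\Im z$. Second, one must upgrade convergence of $\E s_N(z)$ to almost sure convergence of $s_N(z)$; for this, a martingale decomposition of $s_N(z) - \E s_N(z)$ along the filtration generated by successive columns of $A_N$, combined with Burkholder's inequality and the resolvent identity, yields $\V(s_N(z)) = O(1/N^2)$ in the bounded-entry setting, hence a.s.\ convergence by Borel--Cantelli and monotonicity in a dense subset of $\C^+$. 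Finally, the atom of mass $1-c$ at $0$ for $c<1$ is immediate from $\mathrm{rank}(M_N) \leq n = c_N N$, which forces at least $(1-c_N)N$ zero eigenvalues.
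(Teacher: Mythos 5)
The paper does not give a proof of this theorem: it is quoted as a classical fact, with references to Mar\v{c}enko--Pastur and the Bai--Silverstein monograph. Your sketch is essentially the standard Bai--Silverstein argument by Stieltjes transforms (truncation with a rank/Hoffman--Wielandt reduction, Schur complement on a distinguished column of the companion matrix, concentration of the quadratic form around a normalized trace, and a column-by-column martingale decomposition to upgrade convergence of the mean to almost-sure convergence), and that framework is correct. However, two of the central displayed identities are wrong as written, and the errors do not cancel.

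First, the self-consistent equation you state is not the one satisfied by the Stieltjes transform of $\mu_{\sigma,c}$ in the paper's normalization $M_N = \frac{1}{N}A_NA_N^\ast$, $c = \lim n/N$. With $s(z) = \int (x-z)^{-1}\,d\mu_{\sigma,c}(x)$ the correct equation is
\begin{equation*}
z\sigma^2 s(z)^2 + \bigl(z + \sigma^2(1-c)\bigr)s(z) + 1 = 0,
\end{equation*}
which is precisely \eqref{utro11} under $g_{\sigma,c}(z) = -s(z)$. Your version carries a spurious factor of $c$ on the quadratic term and flips the sign of $\sigma^2(1-c)$; one can see it is wrong already from the small-$z$ behaviour, since it does not reproduce the atom of mass $1-c$ at $0$ when $c<1$. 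Second, your Schur-complement formula for the companion resolvent is missing a factor of $z$ in front of the quadratic form. Writing $a_j$ for the $j$-th column of $A_N$, using Lemma \ref{lemma:commute} one finds $X_N^{(j)}\bigl((X_N^{(j)})^\ast X_N^{(j)} - zI\bigr)^{-1}(X_N^{(j)})^\ast = I + z\bigl(M_N^{(j)} - zI\bigr)^{-1}$, and pushing this through the Schur complement gives
\begin{equation*}
\check G_{jj}(z) = \frac{1}{\,-z \;-\; \dfrac{z}{N}\,a_j^\ast\bigl(M_N^{(j)} - zI\bigr)^{-1}a_j\,}\,,
\end{equation*}
not the expression you wrote. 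That factor of $z$ is what turns the concentrated quadratic form $\sigma^2 s_N(z)$ into $\sigma^2 z\,s_N(z)$ in the denominator, and only then does averaging over $j$ together with the companion relation $n\check s_N(z) = N s_N(z) + (N-n)/z$ produce the quadratic equation displayed above. If you carry your (incorrect) Schur identity forward you arrive at yet a third equation, different from both the correct one and the one you claimed, so the slips are not self-correcting. The remaining ingredients of the plan --- truncation under the second-moment Lindeberg condition \eqref{eq:lf}, a Hanson--Wright or Bai--Silverstein quadratic-form estimate after truncation, $\V(s_N(z)) = O(N^{-2})$ via Burkholder for almost-sure convergence along a dense set, and $\operatorname{rank}(M_N)\le n$ for the atom at $0$ --- are standard and sound.
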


\begin{remark}
We note that the Lineberg condition \eqref{eq:lf} is implied by the Lindeberg condition for the fourth moments \eqref{eq:lf4}.  
\end{remark}

Given a probability measure $\mu$ on the real line, its Stieltjes transform is given by
\begin{equation*}
	\int_\R \frac{d \mu(x)}{z - x}, \quad z \in \C \setminus \text{supp}(\mu).
\end{equation*}
For $\Im z \neq 0$, we have the following bound for the Stieltjes transform of any probability measure on $\R$
\begin{equation} \label{STbound}
	\left| \int_\R \frac{d \mu(x)}{z - x} \right| \leq \frac{1}{|\Im(z)|}.
\end{equation}
The Stieltjes transform of $\mu_{\sigma, c}$ is denoted by $g_{\sigma, c}$ and is characterized as the solution of
\begin{equation}
\label{utro11}
	z \sigma^2 g_{\sigma, c}(z) + (\sigma^2(c-1)-z)g_{\sigma, c}(z) + 1 = 0
\end{equation}
that decays to zero as $z \to \infty$.  

The Stieltjes transform of the expectation of the emperical spectral distribution of $M_N$ is given by
\begin{equation*}
	g_N(z) = \E \int_\R \frac{ d \mu_{M_N}(x)}{z - x} = \E\left[ \tr_N (R_N(z)) \right]
\end{equation*}
where $\tr_N := \frac{1}{N} \Tr$ is the normalized trace and $R_N(z) := (zI_N - M_N)^{-1}$ is the resolvent of $M_N$.  If it does not lead to ambiguity, 
we will use the shorthand notation $R_{ij}(z)$ for $(R_N(z))_{ij}, \ 1\leq i,j\leq N.$

For $s\geq 0, $ we consider the space $\mathcal{H}_s$ consisting of the functions $\phi: \R \to \R$ that satisfy
\begin{equation}
\label{Sobolevnorm}
	\| \phi \|_s^2 := \int_\R (1 + 2|k|)^{2s} |\hat{\phi}(k)|^2 dk < \infty.
\end{equation}
We recall that $C^k(X)$ denotes the space of $k$ times continuously differentiable functions on $X \subset \R$ and define the $C^k(X)$ norm  
\begin{equation}
\label{Cknorm}
	\| \phi \|_{C^k(X)} := \max \left( \left| \frac{d^l f(x)}{dx^l}  \right|,~~ x\in X   , ~~0\leq l \leq k  \right).
\end{equation}

We now present our main results.

\begin{theorem} \label{thm:main_real}
Let $A_N$ be a $N \times n$ random matrix with real entries that satisifies condition {\bf C2}.  Let $m$ be a fixed positive integer and assume that for 
$1 \leq i \leq m$
\begin{equation*} 
	m_4(i) := \lim_{N \rightarrow \infty} \frac{1}{n} \sum_{j} \E|A_{ij}|^4  
\end{equation*}
exists and for all $\epsilon > 0$
\begin{equation} \label{eq:lf0.25}
	\frac{1}{N} \sum_{j=1}^n \E |(A_N)_{ij}|^4 \indicator{ |(A_N)_{ij}| > \epsilon N^{1/4}} \longrightarrow 0, \quad 1 \leq i \leq m,
\end{equation}
as $N \to \infty$.  Assume $c_N \to c \in (0,\infty)$ as $N \rightarrow \infty$ and let $f \in \mathcal{H}_s$ for some $s > 3$.  Then we have the following:

\begin{enumerate}[(i)]
\item The normalized matrix entries
\begin{equation*}
	\left\{ \sqrt{N} \left( f(M_N)_{ij} - \E \left[ f(M_N)_{ij} \right] \right) : 1 \leq i \leq j \leq m \right\}
\end{equation*}
are independent in the limit $N \to \infty$.  

\item For $1 \leq i < j \leq m$, 
\begin{equation*}
	\sqrt{N} \left( f(M_N)_{ij} - \E \left[ f(M_N)_{ij} \right] \right) \longrightarrow N \left( 0, \omega^2(f) \right)
\end{equation*}
in distribution as $N \rightarrow \infty,$ where 
\begin{equation} \label{def:omega}
	\omega^2(f) = \var(f(\eta_c)).
\end{equation}
and $\eta_c$ is a Marchenko-Pastur distributed random variable with ratio index $c$ and scale index $\sigma^2$ and

\item For $1 \leq i \leq m$,

\[ \sqrt{N}( f(M_N)_{ii} - \E[f(M_N)_{ii}]) \to  N\left(0, 2\omega^2(f)+ \frac{\kappa_4(i)}{\sigma^4} \rho^2(f)) \right)  \]
in distribution as $N \rightarrow \infty,$ where
\begin{equation} \label{def:rho}
	\rho(f) =  \E\left[f(\eta_c)\frac{\eta_c - c \sigma^2}{\sqrt{c}\sigma^2}\right]
\end{equation}

and
\begin{equation} \label{eq:real_cumulant4}
	\kappa_4(i) := m_4(i) - 3 \sigma^4.
\end{equation}

\end{enumerate}
\end{theorem}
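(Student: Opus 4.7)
The overall plan is to transport the strategy of \cite{PRS}, \cite{LytP}, \cite{ors} from the Wigner setting to the sample covariance setting by reducing everything to a CLT for the matrix entries of the unitary family $U_N(k) := e^{ikM_N}$. Using the Fourier representation of $f \in \mathcal{H}_s$ (with $s>3$) one writes
$$\sqrt{N}\bigl(f(M_N)_{ij} - \E[f(M_N)_{ij}]\bigr) = \frac{1}{\sqrt{2\pi}} \int_\R \hat{f}(k)\,Y_{ij}(k)\,dk, \qquad Y_{ij}(k) := \sqrt{N}\bigl((U_N(k))_{ij} - \E[(U_N(k))_{ij}]\bigr).$$
The Sobolev weight $(1+2|k|)^{2s}$ provides enough integrability in $k$ to exchange the limit with the integral, provided one has polynomial-in-$k$ second-moment bounds on $Y_{ij}(k)$; this reduces the problem to a joint CLT for $\{Y_{ij}(k)\}$ on compact subsets of $k$.

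To prove this CLT, I would use a martingale decomposition with respect to the filtration generated successively by the columns of $X_N$. Each entry $(U_N(k))_{ij}$ is related through a Duhamel identity to resolvent quantities of $M_N$, and martingale increments are produced by applying the Schur complement formula to $R_N(z)=(zI-M_N)^{-1}$ one column at a time. The increments are controlled via standard concentration of quadratic forms in the $(A_N)_{ij}$, and the Lindeberg hypothesis \eqref{eq:lf0.25} at scale $N^{1/4}$ is precisely what is needed to truncate the fourth moments and verify a Lyapunov-type condition. For distinct pairs $(i,j) \neq (i',j')$ with $\{i,j\}\cap\{i',j'\}=\emptyset$, the dominant martingale increments of $Y_{ij}$ and $Y_{i'j'}$ involve disjoint rows of $A_N$, so their limiting covariance vanishes; when $(i,j)=(i',j')$ is off-diagonal, the variance computation reduces, via the Marchenko--Pastur self-consistent equation \eqref{utro11}, to an integral against $\mu_{\sigma,c}$ that equals $\cov(e^{ik_1\eta_c},e^{ik_2\eta_c})$, and hence gives $\omega^2(f)$ after Fourier inversion.

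The diagonal case follows the same scheme, but the quadratic-form variance at row $i$ splits into two pieces. The off-diagonal-in-$j$ terms $(A_N)_{ij}\overline{(A_N)_{il}}$ produce a ``Gaussian'' contribution which, by a direct count lacking the cancellation present in the off-diagonal case, is exactly twice the off-diagonal one and yields $2\omega^2(f)$. The diagonal-in-$j$ terms $|(A_N)_{ij}|^4$ produce the genuine fourth-cumulant piece; after using \eqref{utro11} to recognize $\frac{\eta_c-c\sigma^2}{\sqrt{c}\sigma^2}$ as a derivative of the Stieltjes transform in disguise, this Fourier integral evaluates to $\rho(f)^2$ with $\rho$ as in \eqref{def:rho}, weighted by $\kappa_4(i)/\sigma^4$ through \eqref{eq:real_cumulant4}. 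Asymptotic independence in part~(i) then follows from the joint characteristic function by linearity of the martingale decomposition together with the vanishing of off-diagonal limit covariances noted above.

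The main technical obstacle I anticipate is the fourth-cumulant bookkeeping in the diagonal case: unlike the Wigner setting, the double-index structure $M_N = X_N X_N^*$ forces a careful separation between fourth-moment contributions that are already captured by the Gaussian part and the genuine cumulant correction tied to $\E|(A_N)_{ij}|^4 - 3\sigma^4$. Identifying the resulting limit integral as exactly $\rho(f)^2$ requires a nontrivial manipulation of \eqref{utro11} together with a change of variables against the Marchenko--Pastur density; controlling the error terms in the Duhamel expansion uniformly in $k$ with only the $\mathcal{H}_s$ regularity of $f$ is the other delicate point.
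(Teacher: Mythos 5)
The paper takes a resolvent-based route: Theorem~\ref{thm:resolvent-clt} proves a CLT for the random field $\Psi_N(z)=\sqrt{N}(R_N^{(m)}(z)-g_{\sigma,c_N}(z)I)$ by writing $R_N^{(m)}(z)$ via the Schur complement as $\bigl(zI_m-\bigl(\frac{1}{N}r_i B_N^{(m)}(z)r_j^*\bigr)_{i,j}\bigr)^{-1}$, where $B_N^{(m)}(z)$ depends only on the rows $r_{m+1},\dots,r_N$; conditionally on those rows the quantities $r_iBr_j^*$ are quadratic forms in the independent $r_1,\dots,r_m$, and a joint CLT for quadratic forms immediately gives the Gaussian field $Y_{ij}(z)$ with the stated covariance. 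General $f\in\mathcal H_s$ are then reached by Stone--Weierstrass approximation with finite linear combinations of $(z_l-x)^{-1}$, using the Helffer--Sj\"ostrand-based variance bound of Proposition~\ref{expandvarf} to control the error. Your proposal instead follows the Lytova--Pastur route: work with $U_N(k)=e^{ikM_N}$, express $f$ by a Fourier integral, and establish a CLT for the field $Y_{ij}(k)$. That route is viable in principle for parts (ii) and (iii), but it is genuinely different and you should know that the paper does \emph{not} pass through $e^{ikM_N}$ at all.

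The concrete gap is in your treatment of part (i). You only argue for vanishing covariance when $\{i,j\}\cap\{i',j'\}=\emptyset$, on the grounds that the dominant martingale increments involve disjoint rows of $A_N$. But the theorem asserts independence for \emph{all} pairs with $1\le i\le j\le m$, including overlapping pairs such as $(1,1)$ and $(1,2)$, or $(1,2)$ and $(2,3)$. For these, the increments involve overlapping rows and your argument says nothing. (In the paper's framework the overlapping case is handled automatically: $\E[(r_1Br_1^*)\overline{(r_1Br_2^*)}]=0$ and $\E[(r_1Br_2^*)\overline{(r_2Br_3^*)}]=0$ because the ``extra'' row appears linearly and is centered, and the joint Gaussianity of the limiting quadratic forms turns vanishing covariance into independence.) A secondary concern is that the ``martingale decomposition over columns'' you propose is closer to the Bai--Silverstein method for linear spectral statistics than to the actual Lytova--Pastur argument (which derives an ODE for the characteristic function), and it is not obvious that a column-wise filtration cleanly separates the row-indexed randomness driving the fluctuations of $(U_N(k))_{ij}$; making the ``dominant increments $\approx$ leading resolvent entries in the two rows'' heuristic rigorous already requires essentially the resolvent estimates of Proposition~\ref{expandvar}, at which point the paper's Schur-complement reduction is both shorter and gives part~(i) for free.
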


\begin{theorem} \label{thm:main_complex}
Let $A_N$ be a $N \times n$ random matrix with complex entries that satisifies condition {\bf C1}.  Let $m$ be a fixed positive integer and assume that 
for $1 \leq i \leq m$
\begin{equation*} 
	m_4(i) := \lim_{N \rightarrow \infty} \frac{1}{n} \sum_{j} \E|A_{ij}|^4  
\end{equation*}
exists and for all $\epsilon > 0$ \eqref{eq:lf0.25} holds as $N \to \infty$.  Assume $c_N \to c \in (0,\infty)$ as $N \rightarrow \infty$ and let 
$f \in \mathcal{H}_s$ for some $s > 3$.  Then we have the following:

\begin{enumerate}[(i)]
\item The normalized matrix entries
\begin{equation*}
	\left\{ \sqrt{N} \left( f(M_N)_{ij} - \E \left[ f(M_N)_{ij} \right] \right) : 1 \leq i \leq j \leq m \right\}
\end{equation*}
are independent in the limit $N \to \infty$.  

\item For $1 \leq i < j \leq m$, 
\begin{equation*}
	\sqrt{N} \left( f(M_N)_{ij} - \E \left[ f(M_N)_{ij} \right] \right) \longrightarrow N \left( 0, \omega^2(f) \right)
\end{equation*}
in distribution as $N \rightarrow \infty$ where $N \left( 0, \omega^2(f) \right)$ stands for the complex Gaussian random variable with i.i.d. 
real and imaginary parts with variance $ \frac{1}{2}\*\omega^2(f)$ and $\omega(f)$ is defined in \eqref{def:omega}.  

\item For $1 \leq i \leq m$, 
 
\[ \sqrt{N}( f(M_N)_{ii} - \E[f(M_N)_{ii}]) \to  N\left(0, \omega^2(f)+ \frac{\kappa_4(i)}{\sigma^4} \rho^2(f)) \right)  \]
in distribution as $N \rightarrow \infty$ where $\omega(f)$ is defined in \eqref{def:omega}, $\rho(f)$ is defined in \eqref{def:rho}, and 
\begin{equation*} 
	\kappa_4(i) := m_4(i) - 2 \sigma^4.
\end{equation*}
\end{enumerate}

\end{theorem}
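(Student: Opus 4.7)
The plan is to parallel the proof of Theorem \ref{thm:main_real}, adapting each step to the complex setting in which the condition $\E (A_N)_{ij}^2 = 0$ modifies the limiting covariance structure in a clean and predictable way. The architecture has three layers: a Fourier-analytic reduction to resolvent entries, a martingale CLT for those resolvent entries at fixed spectral parameters, and an explicit evaluation of the limiting (co)variance kernels against $\hat f$. Since $f \in \mathcal{H}_s$ with $s>3$, I would first write an integral representation such as
\begin{equation*}
    f(M_N)_{ij} \;=\; \frac{1}{\sqrt{2\pi}}\int_\R \hat f(k)\,\bigl(e^{ikM_N}\bigr)_{ij}\,dk,
\end{equation*}
or equivalently a Helffer--Sj\"ostrand representation in terms of $R_N(z)$; the role of $s>3$ is to provide enough decay of $\hat f$ to swap limit and integral against polynomially-growing covariance bounds.

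The central limit theorem at the level of resolvent entries is obtained from the column-martingale decomposition of $R_{ij}(z)$ based on $M_N = \frac{1}{N}\sum_{k=1}^n a_k a_k^*$, where $a_k$ is the $k$th column of $A_N$, combined with the Sherman--Morrison rank-one update formula. The conditional expectations with respect to the filtration $\mathcal{F}_k = \sigma(a_1,\dots,a_k)$ produce martingale differences of size $O(1/\sqrt{N})$, and a martingale CLT then yields joint asymptotic normality of
\begin{equation*}
    \bigl\{\sqrt{N}\bigl(R_{ij}(z_\ell) - \E R_{ij}(z_\ell)\bigr)\,:\ 1 \le i \le j \le m,\ 1 \le \ell \le L\bigr\}
\end{equation*}
with a covariance kernel expressible through the Marchenko--Pastur Stieltjes transform $g_{\sigma,c}$ characterized by \eqref{utro11}. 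Two moment-combinatorial facts then pin down the numerical coefficients. First, the complex condition $\E (A_N)_{ij}^2 = 0$ forces the limiting cross-covariance of $\sqrt{N}(R_{ij}(z)-\E R_{ij}(z))$ with $\sqrt{N}(R_{ji}(w)-\E R_{ji}(w))$ to vanish; this eliminates the doubling responsible for the coefficient $2$ in front of $\omega^2(f)$ in the real diagonal case, giving $\omega^2(f)$ here and simultaneously promoting the off-diagonal limit to a circularly symmetric complex Gaussian with real and imaginary parts of variance $\omega^2(f)/2$. Second, for each diagonal entry the contribution of the $i$th row of $A_N$ beyond the Gaussian part is captured by the fourth cumulant $\kappa_4(i) = m_4(i) - 2\sigma^4$, the subtraction $2\sigma^4$ being the complex Isserlis value $\E|Z|^4$ for a circular Gaussian with $\E|Z|^2 = \sigma^2$, and it couples to $f$ only through the linear-statistic mode
\begin{equation*}
    \rho(f) = \E\!\left[f(\eta_c)\,\frac{\eta_c - c\sigma^2}{\sqrt{c}\,\sigma^2}\right],
\end{equation*}
contributing the term $\frac{\kappa_4(i)}{\sigma^4}\rho^2(f)$.

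Integrating the resolvent-level covariance kernel against $\hat f(k)\overline{\hat f(k')}$ via the Fourier representation recovers $\omega^2(f) = \var f(\eta_c)$ as the Gaussian piece; independence of distinct index pairs in part (i) follows from checking that all remaining limiting cross covariances vanish, again crucially using $\E (A_N)_{ij}^2 = 0$ to kill the $(i,j)$--$(j,i)$ coupling that would otherwise survive. The most substantial obstacle is the uniform control required for the integral interchange: one must establish covariance bounds of the form $|N\,\cov(R_{ij}(z),R_{i'j'}(w))| \lesssim |\Im z|^{-p}|\Im w|^{-p}$ with a fixed polynomial loss $p$, together with tightness in the spectral parameter, so that the fixed-$z$ weak limit propagates through the Fourier integral. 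These estimates are precisely where the Lindeberg assumption \eqref{eq:lf0.25} and the uniform fourth-moment bound from condition \textbf{C1} are used essentially, and the assumption $s>3$ is exactly what matches the polynomial loss $p$ in the covariance estimates to yield a finite limiting variance, as in the Wigner arguments of \cite{PRS} and \cite{ors}.
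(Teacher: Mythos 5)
Your plan is structurally reasonable, but it diverges from the paper in the step that carries all the weight, and the divergence is worth spelling out because it changes what actually has to be proved. For the resolvent-level CLT, the paper does not use a column-martingale decomposition $M_N = \frac{1}{N}\sum_k a_k a_k^*$ with Sherman--Morrison updates and a martingale CLT. Instead, it conditions on the rows $r_{m+1},\ldots,r_N$ of $A_N$ and uses the Schur-complement identity
\begin{equation*}
R_N^{(m)}(z) = \left( z I_m - \left(\tfrac{1}{N}\, r_i B_N^{(m)}(z)\, r_j^*\right)_{i,j=1}^m \right)^{-1}, \qquad B_N^{(m)}(z) = I_n + (X_N^{(m)})^* \bigl(z - X_N^{(m)}(X_N^{(m)})^*\bigr)^{-1} X_N^{(m)},
\end{equation*}
so that, conditionally on $B_N^{(m)}$, the entire $m\times m$ corner of the resolvent is an explicit function of the bilinear forms $\frac{1}{N} r_i B_N^{(m)} r_j^*$ in the first $m$ rows. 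A single application of the CLT for quadratic forms (from the appendix of \cite{ors}) then produces the Gaussian field, and the row-specific fourth cumulant $\kappa_4(i)$ falls out immediately because the diagonal entry $R_{ii}$ depends on $r_i$ only through $\frac{1}{N} r_i B_N^{(m)} r_i^*$, whose quadratic-form CLT manifestly carries a $\kappa_4(i)\sum_j |(B_N^{(m)})_{jj}|^2/n$ term. In your column-martingale route, each martingale difference $(\E_k - \E_{k-1})R_{ii}(z)$ mixes entries from all rows of the $k$-th column, so the row-$i$ fourth cumulant has to be teased out of the diagonal term $l=l'=i$ in the conditional quadratic variation $\frac{1}{N^2}\sum_{l,l'} R^{(k)}_{il}\overline{R^{(k)}_{il'}}\,\E[(A_N)_{lk}\overline{(A_N)_{l'k}}|\cdots]$, with all off-row contributions shown to collapse to the Isserlis/Gaussian part. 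That is doable in principle, but it is a genuine additional computation which your sketch does not perform, and it is precisely the part that the Schur-complement decomposition makes trivial. You would also have to verify a Lindeberg condition for the martingale array of fixed-entry differences, which is more delicate than the trace case in Bai--Silverstein because there is no averaging over $i$.

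The second difference is how you pass from the resolvent CLT to general $f \in \mathcal{H}_s$. You propose integrating the Fourier (or Helffer--Sj\"ostrand) representation directly against the limiting covariance kernel, justified by covariance bounds polynomial in $|\Im z|^{-1}$. The paper does something more two-staged: it proves the CLT only for finite linear combinations of resolvents, uses Stone--Weierstrass to approximate a smooth compactly supported $f$ by such combinations in the $C^4$ norm (hence in $\mathcal{H}_s$ for $s\le 4$), and then closes the approximation with the uniform variance bound $\V[f(M_N)_{ij}] \le C\|f\|_s/N$ of Proposition \ref{expandvarf} (itself proved via Helffer--Sj\"ostrand and the polynomial resolvent variance estimate). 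The payoff of the paper's route is that it avoids having to establish tightness of the random field $\Psi_N(\cdot)$ or to justify an interchange of a weak limit with an unbounded integral; it only needs a one-sided variance bound and continuity of $\omega^2(f),\rho(f)$ in the $\mathcal{H}_s$ norm. Your route requires you to make the interchange rigorous, which is where the exponent $s>3$ enters in a less transparent way and where, for the $e^{ikM_N}$ parametrization, you would need a polynomial-in-$|k|$ variance bound on $(e^{ikM_N})_{ij}$ that is not stated in your sketch. Your identification of $2\sigma^4$ as the complex Isserlis value, of $\E(A_N)_{ij}^2=0$ as what kills the $(i,j)$--$(j,i)$ coupling, and of $\rho(f)$ as the mode coupling to $\kappa_4$ are all correct and agree with the paper; the open work in your proposal is the martingale quadratic-variation computation and the interchange justification.
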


\begin{remark}
The limiting distribution of an entry in the sample covariance case is Gaussian and differs from the Wigner case (\cite{LytP}, \cite{PRS}) 
where the limiting distribution is given by a linear combination of an independent Gaussian random variable and 
the corresponding entry of the Wigner matrix.  However, in the 
square case ($c=1$) the limiting distribution of $\sqrt{N} \left( f(M_N)_{ij} - \E \left[ f(M_N)_{ij} \right] \right)$ coincides with the limiting 
distribution of $\sqrt{N} \left( g(W_N)_{ij} - \E \left[ g(W_N)_{ij} \right] \right),$ where $g(x)=f(x^2)$ and $W_N$ is a Wigner random matrix.
This is not surprising since $M_N$ is the $N\times N$ upper-left corner submatrix of the $(N+n)\times (N+n)$ matrix $Z^2_{N,n},$ where the 
$N\times N$ upper-left  and $n\times n$ lower-right corner submatrices of $Z_{N,n}$ are both zero, the $N\times n$ upper-right corner submatrix of
$Z_{N,n}$ is given by $X_N,$ and the $n\times N$ lower-left corner submatrix of $Z_{N,n}$ is given by $X_N^*.$  The limiting spectral distribution of 
$Z_{N,n}$ in the case $n/N\to c=1$ is given by the Wigner semicircle law and the technique of \cite{PRS}, \cite{ors} in the square case can be extended 
without any difficulties to $Z_{N,n}.$
\end{remark}

\begin{remark}
The functions $1$ and $\frac{x - c \sigma^2}{\sqrt{c}\sigma^2}$ are the first two orthonormal polynomials with respect to $\mu_{\sigma,c}(dx)$.
Therefore, by the Bessel inequality, the variance of the limiting Gaussian distribution 
for the diagonal entries is zero if and only if the test function is linear and the marginal 
distribution is Bernoulli. For the off-diagonal entries, it immediately follows from (\ref{def:omega}) that the variance is zero iff the test 
function is constant on the support of the Marchenko-Pastur law.
\end{remark}

\begin{remark}
It follows from  Proposition \ref{expandvarf} and Lemma \ref{lemma:spectral-norm} that if
$f \in C^7(\R)$ for the diagonal entries $i=j$ ($f \in C^6(\R)$ in the off-diagonal case $i\neq j$),  one can replace 
$\E[f(M_N)_{ij}]$ in Theorems \ref{thm:main_real} and \ref{thm:main_complex} by $\delta_{ij}\*\int f(x) d \mu_{\sigma, c_N}(x)$.
Moreover, as shown in  Proposition \ref{expandvarf}, if $\text{supp}(f) \cap R_+$ is compact, where $R^+= [0, \infty)$ and 
$f$ has seven continuous derivatives, then 
$\E[f(M_N)_{ii}]=\int f(x) d \mu_{\sigma, c_N}(x) + O\left(\frac{1}{N}\right).$ If $f $ has six bounded continuous derivatives on $R_+,$
then $\E \left[ f(M_N)_{ij} \right]=O\left(\frac{1}{N}\right), \ i \neq j.$
\end{remark}

We divide the proof of Theorems \ref{thm:main_real} and \ref{thm:main_complex} into several sections.  In Section \ref{sect:truncation}, we apply a 
standard truncation lemma to the matrix entries of $A_N$.  Section \ref{sect:variance} is devoted to computing the expectation and variance of the 
entries of the resolvent, $R_N(z)$, and Section \ref{sect:functional} extends these results to more general functions.  In 
Section \ref{sect:resolvent}, we prove a central limit theorem for entries of $f(M_N)$ where $f(x)$ is a finite linear combination
of the functions $(z-x)^{-1}, \ z \in \C \setminus \R$.  Finally, we extend this result to more general test functions 
$f \in \mathcal{H}_s$ by an approximation argument.  

\section{Truncation and Extremal Eigenvalues} \label{sect:truncation}

We note that by \eqref{eq:lf4}, we can choose a sequence $\epsilon_N \rightarrow 0$ such that
\begin{equation} \label{eq:lf4-e}
	\frac{1}{\epsilon_N^4 N^2} \sum_{i,j} \E |(A_N)_{ij}|^4 \indicator{ |(A_N)_{ij}| > \epsilon_N \sqrt{N}} \longrightarrow 0
\end{equation}
as $N \rightarrow \infty$.  

\begin{lemma} \label{lemma:truncation}
Assume that $A_N$ is an $N \times n$ matrix that satisifies condition {\bf C1} in the complex case (condition {\bf C2} in the real case).  
Then there exists a random $N \times n$ matrix $\tilde{A}_N$ with independent entries 
and a sequence $\epsilon_N$ which tends to zero as $N$ tends to infinity such that
\begin{enumerate}[(i)]
	\item the entries $(\tilde{A}_N)_{ij}$ have mean zero and variance $\sigma^2$,
	\item $\sup_{i,j} |(\tilde{A}_N)_{ij}| \leq \epsilon_N \sqrt{N}$,
	\item $\sup_{N,i,j} \E |(\tilde{A}_N)_{ij}|^4 < \infty$,
	\item $\Prob(A_N \neq \tilde{A}_N) \longrightarrow 0$ as $N \rightarrow \infty$.
\end{enumerate}
\end{lemma}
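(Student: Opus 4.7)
The argument is the standard truncation used throughout random matrix theory. The plan is to (1) extract a slowly vanishing sequence $\epsilon_N$ from the strengthened Lindeberg condition \eqref{eq:lf4-e}; (2) entry-wise truncate $A_N$ at level $\epsilon_N\sqrt N$; (3) compensate by coupling to an independent family of bounded auxiliary random variables, to restore the exact mean/variance constraints in (i); (4) control the failure probability in (iv) by a union bound with Chebyshev.

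Concretely, I would set $\hat A_{ij} := A_{ij}\,\indicator{|A_{ij}|\le\epsilon_N\sqrt N}$, and first show, using $\E A_{ij}=0$, $\E A_{ij}^2=\sigma^2$, Cauchy--Schwarz, and Chebyshev's inequality applied via the fourth moment, that $|\E\hat A_{ij}|$ and $|\sigma^2-\E\hat A_{ij}^2|$ are both $o(1)$ uniformly in $(i,j)$, with the $o(1)$ controlled directly by \eqref{eq:lf4-e}. Then I would introduce an independent family $\{Z_{ij}\}$ supported in $[-\epsilon_N\sqrt N,\epsilon_N\sqrt N]$, independent of $\{A_{ij}\}$, and define
\[ \tilde A_{ij} := \hat A_{ij} + Z_{ij}\,\indicator{|A_{ij}|>\epsilon_N\sqrt N}, \]
calibrating the law of each $Z_{ij}$ so that $\E\tilde A_{ij}=0$ and $\var(\tilde A_{ij})=\sigma^2$. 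Independence of the $\{\tilde A_{ij}\}$ across $(i,j)$ is preserved, giving (i).

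Property (ii) is immediate by construction (after harmlessly enlarging $\epsilon_N$ to absorb constants). Property (iii) follows from $\E|\hat A_{ij}|^4\le m_4$ together with
\[ \E|Z_{ij}|^4\,\Prob(|A_{ij}|>\epsilon_N\sqrt N) \le (\epsilon_N\sqrt N)^4\,\Prob(|A_{ij}|>\epsilon_N\sqrt N) \le \E|A_{ij}|^4\indicator{|A_{ij}|>\epsilon_N\sqrt N} \le m_4. \]
For (iv), on the event $\bigcap_{i,j}\{|A_{ij}|\le\epsilon_N\sqrt N\}$ one has $\tilde A_{ij}=\hat A_{ij}=A_{ij}$ for every entry, so Chebyshev's inequality gives
\[ \Prob(A_N\neq\tilde A_N) \le \sum_{i,j}\Prob(|A_{ij}|>\epsilon_N\sqrt N) \le \frac{1}{\epsilon_N^4 N^2}\sum_{i,j}\E|A_{ij}|^4\indicator{|A_{ij}|>\epsilon_N\sqrt N} \to 0 \]
by the choice of $\epsilon_N$ in \eqref{eq:lf4-e}.

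The one nontrivial point is the realizability of the calibrating laws $\{Z_{ij}\}$ inside $[-\epsilon_N\sqrt N,\epsilon_N\sqrt N]$ with the prescribed mean and second-moment values; since the corrections to be effected are of vanishing size in $N$ while the admissible support diverges, this can be arranged by an explicit two-atom construction for $N$ sufficiently large. Everything else is routine.
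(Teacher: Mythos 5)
Your overall strategy (truncate, compensate, union bound) coincides with the paper's, and your bounds on the truncation bias, the fourth moment of $\tilde A_N$, and the failure probability in (iv) are all fine. The gap is in the compensation mechanism, and it is fatal as stated. You set
\[
\tilde A_{ij} = \hat A_{ij} + Z_{ij}\,\indicator{|A_{ij}| > \epsilon_N\sqrt{N}},
\]
with $Z_{ij}$ independent of $A_{ij}$ and $|Z_{ij}|\le \epsilon_N\sqrt N$. Write $p_{ij}=\Prob(|A_{ij}|>\epsilon_N\sqrt N)$, $m_{Nij}=\E\hat A_{ij}$, and $v^2_{Nij}=\sigma^2-\E\hat A_{ij}^2=\E A_{ij}^2\indicator{|A_{ij}|>\epsilon_N\sqrt N}$. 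Because the two summands defining $\tilde A_{ij}$ have disjoint supports, your calibration to $\E\tilde A_{ij}=0$ and $\var(\tilde A_{ij})=\sigma^2$ forces $\E Z_{ij}=-m_{Nij}/p_{ij}$ and $\E Z_{ij}^2=v_{Nij}^2/p_{ij}$. These are \emph{not} of vanishing size. For instance, if $A_{ij}$ puts mass $N^{-4}$ at $N$ (which has bounded fourth moment and is allowed by {\bf C2}), then $p_{ij}\approx N^{-4}$ and $v^2_{Nij}\approx N^{-2}$, so $v^2_{Nij}/p_{ij}\approx N^2$; no random variable bounded by $\epsilon_N\sqrt N$ can have that second moment. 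The difficulty is structural: by conditioning the replacement on the truncation event itself, you tie a possibly tiny probability $p_{ij}$ to moment deficits $m_{Nij},v^2_{Nij}$ that need not be commensurably small.

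The paper avoids this by decoupling the replacement from the truncation event. There, $\tilde A_{ij}$ is a mixture (via an independent coin) of $\hat A_{ij}$ and a compensating two-point variable $\xi_{Nij}$, with mixture probability chosen to be $q_{ij}:=|m_{Nij}|/(\epsilon_N\sqrt N)+v^2_{Nij}/(\epsilon_N^2 N)$ rather than $p_{ij}$. With this choice the prescribed moments of $\xi_{Nij}$ automatically satisfy $|\mu_{Nij}|\le\epsilon_N\sqrt N$ and $\tau^2_{Nij}=O(\epsilon_N^2 N)$, so the two-atom construction you invoke genuinely goes through. The cost is that $\Prob(\tilde A_N\neq A_N)$ now also collects the extra mixture probabilities $\sum_{i,j}q_{ij}$, but by \eqref{eq:m-bound}--\eqref{eq:v-bound} these are still summable under \eqref{eq:lf4-e}. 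To fix your argument you should replace the indicator $\indicator{|A_{ij}|>\epsilon_N\sqrt N}$ by an independent Bernoulli with this tailored success probability $q_{ij}$.
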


\begin{proof}
We present the proof in the case where the entries of $A_N$ are real.  The complex case follows a similar argument.  We begin by selecting a sequence 
$\epsilon_N \rightarrow 0$ such that \eqref{eq:lf4-e} holds.  Then let
\begin{equation*}
	(\hat{A}_N)_{ij} = (A_N)_{ij} \indicator{ |(A_N)_{ij}| \leq \epsilon_N \sqrt{N}}.
\end{equation*}
Define
\begin{align*}
	m_{Nij} &= \E (\hat{A}_N)_{ij} \\
	v^2_{Nij} &= \sigma^2 - \E (\hat{A}_N)^2_{ij}.
\end{align*}
Then we have that
\begin{equation} \label{eq:m-bound}
	| m_{Nij}| \leq \E |(A_N)_{ij}| \indicator{ |(A_N)_{ij}| > \epsilon_N \sqrt{N}} 
\leq\frac{\E |(A_N)_{ij}|^4 \indicator{ |(A_N)_{ij}| > \epsilon_N \sqrt{N}}}{\epsilon_N^3 N^{3/2}} = O\left(\frac{1}{\epsilon_N^{3} N^{3/2}} \right)
\end{equation}
and similarly
\begin{align} \label{eq:v-bound}
& 	v^2_{Nij} \leq \E |(A_N)_{ij}|^2 \indicator{ |(A_N)_{ij}| > \epsilon_N \sqrt{N}} \leq O\left( \frac{1}{\epsilon_N^2 N} \right),\\
\label{eq:mv-bound}
& | m_{Nij}| \leq \frac{v^2_{Nij}}{\epsilon_N N^{1/2}}.
\end{align}

We now define $(\tilde{A}_N)_{ij}$ to be a mixture of 
\begin{enumerate}[(1)]
	\item $(\hat{A}_N)_{ij}$ with probability $1 - \frac{|m_{Nij}|}{\epsilon_N \sqrt{N}} - \frac{v^2_{Nij}}{\epsilon_N^2 N}$; and
	\item a Bernoulli random variable $\xi_{Nij}$ with probability $\frac{|m_{Nij}|}{\epsilon_N \sqrt{N}} + \frac{v^2_{Nij}}{\epsilon_N^2 N}$
\end{enumerate}
where we denote the mean  and the second moment  of $\xi_{Nij}$ by $\mu_{Nij}$ and $\tau^2_{Nij}.$

We can choose $\xi_{Nij}$ such that
\begin{enumerate}[(i)]
	\item $\E(\tilde{A}_N)_{ij} = 0$,
	\item $\var(\tilde{A}_N)_{ij} = \sigma^2$,
	\item $(\tilde{A}_N)_{ij} \leq C \epsilon_N \sqrt{N}$ for some absolute constant $C$.
\end{enumerate}

We now verify that such a construction is possible.  Essentially, we have to show that one can choose $\xi_{Nij}$ in such a way that
(i) and (ii) are satisfied and  
\begin{align} \label{eq:moment-bounds}
	|\mu_{Nij}| \leq C_1 \epsilon_N \sqrt{N} \text{ and } \tau_{Nij}^2  \leq C_2 \epsilon_N^2 N 
\end{align}
for some absolute constants $C_1,C_2>0.$ Indeed, if this is the case, we can construct $\xi_{Nij} = \mu_{Nij} + \psi_{Nij}$ where 
$\psi_{Nij}$ is a symmetric Bernoulli random variable satisfying $|\psi_{Nij}| \leq C \epsilon_N \sqrt{N}$ where $C$ is an absolute 
constant that depends on $C_1$ and $C_2.$ This would immediately follow from
\eqref{eq:moment-bounds}. To verify \eqref{eq:moment-bounds}, we note that
\begin{align*}
	0 &= m_{Nij} \left( 1 - \frac{|m_{Nij}|}{\epsilon_N \sqrt{N}} - \frac{v^2_{Nij}}{\epsilon_N^2 N} \right) + 
\mu_{Nij} \left( \frac{|m_{Nij}|}{\epsilon_N \sqrt{N}} + \frac{v^2_{Nij}}{\epsilon_N^2 N} \right) \\
	\sigma^2 &= (\sigma^2 - v^2_{Nij}) \left( 1 - \frac{|m_{Nij}|}{\epsilon_N \sqrt{N}} - \frac{v^2_{Nij}}{\epsilon_N^2 N} \right) + 
\tau^2_{Nij} \left( \frac{|m_{Nij}|}{\epsilon_N \sqrt{N}} + \frac{v^2_{Nij}}{\epsilon_N^2 N} \right).
\end{align*}
Solving for $\mu_{Nij}$ in the first equation and $\tau_{Nij}$ in the second and applying \eqref{eq:m-bound}-\eqref{eq:mv-bound} yields the 
required bounds \eqref{eq:moment-bounds}, verifying the claim.  

We note that without loss of generality we may assume $C=1$ by our choice of the sequence $\epsilon_N$.  

Next by \eqref{eq:m-bound} and \eqref{eq:v-bound}, we have that 
\begin{equation*}
	\E |(\tilde{A}_N)_{ij}|^4 \leq \E|(A_N)_{ij}|^4 + \left( \frac{|m_{Nij}|}{\epsilon_N \sqrt{N}} + \frac{v^2_{Nij}}{\epsilon_N^2 N} \right) 
(\epsilon_N \sqrt{N})^4 \leq m_4 + O(1).
\end{equation*}

To complete the proof of Lemma \ref{lemma:truncation}, we apply \eqref{eq:m-bound}, \eqref{eq:v-bound}, and \eqref{eq:lf4-e} to obtain  
\begin{align*}
	\Prob(\tilde{A}_N \neq A_N) &\leq \sum_{i,j} \left( \frac{|m_{Nij}|}{\epsilon_N \sqrt{N}} + \frac{v^2_{Nij}}{\epsilon_N^2 N} + 
\Prob( |(A_N)_{ij}| > \epsilon_N \sqrt{N}) \right) \\
		& \leq \frac{3}{\epsilon_N^4 N^2} \sum_{i,j} \E |(A_N)_{ij}|^4 \indicator{|(A_N)_{ij}| > \epsilon_N \sqrt{N}} \longrightarrow 0
\end{align*}
as $N \rightarrow \infty$.  
\end{proof}

We can now apply Lemma \ref{lemma:truncation} to obtain a result on the norm of the matrix $\frac{1}{N} A_N A_N^\ast$.  This result follows from 
\cite[Theorem 5.9]{bai-book}.  We present it here for completeness.  

\begin{lemma} \label{lemma:spectral-norm}
Under the assumptions of Lemma \ref{lemma:truncation}, we have that $\|\frac{1}{N}A_N A_N^\ast \| \longrightarrow \sigma^2 (1+\sqrt{c})^2$ 
in probability as $N \rightarrow \infty$.  
\end{lemma}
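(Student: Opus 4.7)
The plan is to combine the truncation of Lemma \ref{lemma:truncation}, the weak convergence provided by Theorem \ref{thm:mp} (which gives the lower bound), and a standard trace-moment estimate (which gives the matching upper bound).

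First, I invoke Lemma \ref{lemma:truncation} to reduce to the case where the entries of $A_N$ satisfy $|(A_N)_{ij}| \leq \epsilon_N \sqrt{N}$ with $\epsilon_N \to 0$, are centered with variance $\sigma^2$, and have uniformly bounded fourth moments. Because $\Prob(A_N \neq \tilde{A}_N) \to 0$, the matrices $\frac{1}{N}A_N A_N^\ast$ and $\frac{1}{N}\tilde{A}_N \tilde{A}_N^\ast$ coincide off an event of vanishing probability, so convergence in probability for the truncated ensemble implies the result for the original $A_N$. From this point on I work with $\tilde{A}_N$.

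Second, for the lower bound, the Lindeberg condition \eqref{eq:lf4} implies \eqref{eq:lf}, so Theorem \ref{thm:mp} applies: the empirical spectral distribution $\mu_{M_N}$ converges weakly (in probability) to $\mu_{\sigma,c}$, which is supported up to $u_+ = \sigma^2(1+\sqrt{c})^2$. Since $\mu_{\sigma,c}\bigl([u_+-\delta,u_+]\bigr) > 0$ for every $\delta > 0$, the weak convergence forces $\mu_{M_N}\bigl([u_+-\delta,u_+]\bigr) > 0$ with probability tending to one, hence the largest eigenvalue of $M_N$ lies in that interval with probability tending to one. This yields $\liminf_N \|M_N\| \geq u_+$ in probability.

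Third, and this is the main obstacle, I would establish the matching upper bound $\limsup_N \|M_N\| \leq u_+$ via the trace-moment method. For a sequence $k_N \to \infty$ growing slowly enough (e.g., $k_N = \lfloor (\log N)^2 \rfloor$), one shows
\begin{equation*}
  \E \Tr M_N^{k_N} \leq C_N\, u_+^{k_N}
\end{equation*}
with $C_N$ growing only sub-exponentially in $k_N$. Markov's inequality then yields, for any fixed $\delta > 0$,
\begin{equation*}
  \Prob\bigl(\|M_N\| > u_+(1+\delta)\bigr) \leq \frac{\E \Tr M_N^{k_N}}{\bigl(u_+(1+\delta)\bigr)^{k_N}} \longrightarrow 0.
\end{equation*}
The combinatorial heart is the enumeration of closed bipartite walks on $\{1,\dots,N\} \cup \{1,\dots,n\}$ that arise when expanding $\Tr(A_N A_N^\ast)^{k_N}$; in the limit only pair-partitions whose associated graphs are tree-like contribute at leading order, and their Catalan/Narayana-type counts produce exactly the factor $u_+^{k_N}$. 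The truncation step is essential here, because the uniform bound $|(\tilde{A}_N)_{ij}| \leq \epsilon_N \sqrt{N}$ controls the contribution of higher-order moments as $k_N$ grows. This is precisely the content of \cite[Theorem 5.9]{bai-book}, to which one may defer for the detailed combinatorial execution.
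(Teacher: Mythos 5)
Your proposal follows essentially the same route as the paper: reduce to the truncated matrix via Lemma \ref{lemma:truncation}, obtain the lower bound from the Marchenko--Pastur weak convergence in Theorem \ref{thm:mp}, and defer the upper bound to \cite[Theorem~5.9]{bai-book}. The only cosmetic difference is that you sketch the trace-moment argument underlying that cited theorem, whereas the paper simply invokes it (and, for what it is worth, your ``$\liminf$'' is the more precise formulation of the lower-bound step than the ``$\limsup$'' written in the paper).
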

\begin{proof}
Since Theorem 5.9 from \cite{bai-book} does not apply directly to $\frac{1}{N}A_N A_N^\ast$, we simply note that by Lemma \ref{lemma:truncation}, it is 
enough to show $\|\frac{1}{N} \tilde{A}_N \tilde{A}_N^\ast \| \longrightarrow \sigma^2 (1+\sqrt{c})^2$ in probability.  Theorem 5.9 from \cite{bai-book} 
now applies to the matrix $\frac{1}{N}\tilde{A}_N \tilde{A}_N^\ast$ to obtain
\begin{equation*}
	\Prob\left( \left\| \frac{1}{N}\tilde{A}_N \tilde{A}_N^\ast \right\| > \sigma^2 (1 + \sqrt{c})^2 + x\right) \longrightarrow 0
\end{equation*}
as $N \rightarrow \infty$ for all $x > 0$.  The proof is then complete by noting that Theorem \ref{thm:mp} implies that, with probability $1$,
\begin{equation*}
	\limsup_{N \rightarrow \infty} \left\| \frac{1}{N}\tilde{A}_N \tilde{A}_N^\ast \right\| \geq \sigma^2 (1+ \sqrt{c})^2.
\end{equation*}
\end{proof}

We also note that by \eqref{eq:lf0.25}, we can choose a sequence $\epsilon_N \rightarrow 0$ such that
\begin{equation} \label{eq:lf0.25-e}
	\frac{1}{\epsilon_N^4 N} \sum_{j=1}^n \E |(A_N)_{ij}|^4 \indicator{ |(A_N)_{ij}| > \epsilon_N N^{1/4}} \longrightarrow 0
\end{equation}
as $N \rightarrow \infty$ for $1 \leq i \leq m$. 

\begin{lemma} \label{lemma:finite-truncation}
Let $A_N$ be a $N \times n$ complex (real) matrix that satisifies condition {\bf C1} ({\bf C2}) 
and \eqref{eq:lf0.25-e} for $1 \leq i \leq m$, where $m$ is a fixed positive 
integer.  Then there exists a random $N \times n$ matrix $\tilde{A}_N$ with independent entries 
and a sequence $\epsilon_N$ which tends to zero as $N$ tends to infinity such that
\begin{enumerate}[(i)]
	\item $(\tilde{A}_N)_{ij} = (A_N)_{ij}$ for $m < i \leq N$ and $1 \leq j \leq n$,
	\item the entries $(\tilde{A}_N)_{ij}$ have mean zero and variance $\sigma^2$,
	\item $\sup_{j,1 \leq i \leq m} |(\tilde{A}_N)_{ij}| \leq \epsilon_N N^{1/4}$,
	\item $\sup_{N,i,j} \E |(\tilde{A}_N)_{ij}|^4 < \infty$,
	\item $\Prob(A_N \neq \tilde{A}_N) \longrightarrow 0$ as $N \rightarrow \infty,$
        \item $\frac{1}{n} \sum_{j} ( \E|A_{ij}|^4 -\E|\tilde{A}_{ij}|^4) \to 0, \ 1\leq i\leq m.$
\end{enumerate}
\end{lemma}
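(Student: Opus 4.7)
The plan is to repeat, with only cosmetic modifications, the construction used for Lemma~\ref{lemma:truncation}, but restricted to the first $m$ rows of $A_N$ and with the sharper truncation level $\epsilon_N N^{1/4}$ dictated by the Lindeberg condition \eqref{eq:lf0.25-e}. Concretely, for $i>m$ and all $j$ I would set $(\tilde A_N)_{ij}=(A_N)_{ij}$, which automatically gives (i). For $1\le i\le m$ I would set
\[
(\hat A_N)_{ij}=(A_N)_{ij}\indicator{|(A_N)_{ij}|\le \epsilon_N N^{1/4}},\qquad m_{Nij}=\E(\hat A_N)_{ij},\qquad v^2_{Nij}=\sigma^2-\E(\hat A_N)_{ij}^2,
\]
and then let $(\tilde A_N)_{ij}$ be the same mixture as in Lemma~\ref{lemma:truncation} of $(\hat A_N)_{ij}$ with a Bernoulli random variable $\xi_{Nij}$, the mixing probability being $p_{Nij}=\frac{|m_{Nij}|}{\epsilon_N N^{1/4}}+\frac{v_{Nij}^2}{\epsilon_N^2 N^{1/2}}$.

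The elementary bounds that drive the proof are the exact analogues of \eqref{eq:m-bound}--\eqref{eq:mv-bound}:
\[
|m_{Nij}|\le\frac{\E|(A_N)_{ij}|^4\indicator{|(A_N)_{ij}|>\epsilon_N N^{1/4}}}{\epsilon_N^3 N^{3/4}},\qquad v_{Nij}^2\le\frac{\E|(A_N)_{ij}|^4\indicator{|(A_N)_{ij}|>\epsilon_N N^{1/4}}}{\epsilon_N^2 N^{1/2}},\qquad |m_{Nij}|\le\frac{v_{Nij}^2}{\epsilon_N N^{1/4}}.
\]
Because of the uniform fourth-moment bound in {\bf C1}/{\bf C2}, these estimates give $p_{Nij}=O(1/(\epsilon_N^4 N))$, and the same algebra as in Lemma~\ref{lemma:truncation} shows that the mean $\mu_{Nij}$ and second moment $\tau^2_{Nij}$ of the compensating Bernoulli variable satisfy $|\mu_{Nij}|=O(\epsilon_N N^{1/4})$ and $\tau^2_{Nij}=O(\epsilon_N^2 N^{1/2})$. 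Choosing $\xi_{Nij}=\mu_{Nij}+\psi_{Nij}$ with a symmetric Bernoulli $\psi_{Nij}$ bounded by $C\epsilon_N N^{1/4}$ delivers (ii) and (iii); condition (iv) follows because the worst-case contribution of the Bernoulli piece to the fourth moment is $p_{Nij}(\epsilon_N N^{1/4})^4=O(1)$.

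For (v), the row restriction reduces the union bound to
\[
\Prob(A_N\neq \tilde A_N)\le\sum_{i=1}^m\sum_{j=1}^n\Bigl(\Prob(|(A_N)_{ij}|>\epsilon_N N^{1/4})+p_{Nij}\Bigr)\le\frac{C_m}{\epsilon_N^4 N}\sum_{i=1}^m\sum_{j=1}^n \E|(A_N)_{ij}|^4\indicator{|(A_N)_{ij}|>\epsilon_N N^{1/4}},
\]
which tends to zero by \eqref{eq:lf0.25-e}. For the new condition (vi), I would decompose
\[
\E|(A_N)_{ij}|^4-\E|(\tilde A_N)_{ij}|^4=\E|(A_N)_{ij}|^4\indicator{|(A_N)_{ij}|>\epsilon_N N^{1/4}}+p_{Nij}\bigl(\E|\hat A_{ij}|^4-\E|\xi_{Nij}|^4\bigr),
\]
sum over $j$ and divide by $n$. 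The first term is handled directly by \eqref{eq:lf0.25}, which is weaker than \eqref{eq:lf0.25-e}. For the second term, $\sum_j p_{Nij}=o(1)$ from the earlier estimate and $|\E|\hat A_{ij}|^4-\E|\xi_{Nij}|^4|=O((\epsilon_N N^{1/4})^4)=O(\epsilon_N^4 N)$, so its contribution is $o(\epsilon_N^4 N)/n\to 0$ because $n/N\to c$. The only step requiring genuine thought beyond the earlier lemma is this last verification of (vi); the rest is a bookkeeping exercise in which the exponents of $N$ in the probabilities and in the compensating Bernoulli are shifted from the $\sqrt N$ scale to the $N^{1/4}$ scale.
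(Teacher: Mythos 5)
Your proposal is correct and is precisely the adaptation of the construction from Lemma~\ref{lemma:truncation} that the paper intends when it states that the proof is "very similar" and leaves the details to the reader: modify only rows $1\le i\le m$, replace the truncation level $\epsilon_N\sqrt{N}$ by $\epsilon_N N^{1/4}$, shift the exponents in the mixing probability and compensating Bernoulli accordingly, and invoke \eqref{eq:lf0.25-e} in place of \eqref{eq:lf4-e}. The only part not present in the earlier lemma, condition (vi), you handle correctly via the exact decomposition $\E|A_{ij}|^4-\E|\tilde A_{ij}|^4=\E|A_{ij}|^4\indicator{|A_{ij}|>\epsilon_N N^{1/4}}+p_{Nij}\bigl(\E|\hat A_{ij}|^4-\E|\xi_{Nij}|^4\bigr)$, with both pieces going to zero after averaging over $j$ (one small remark: the first piece should be controlled via \eqref{eq:lf0.25-e} rather than \eqref{eq:lf0.25}, since the truncation level involves $\epsilon_N\to 0$ rather than a fixed $\epsilon$, but the estimate works the same way).
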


The proof of Lemma \ref{lemma:finite-truncation} is very similar to the proof of Lemma \ref{lemma:truncation} and the details are left to the reader.  

By Lemmas \ref{lemma:truncation} and \ref{lemma:finite-truncation}, we will assume that all the entries of $A_N$ are bounded by $\epsilon_N \sqrt{N}$ 
and that the entries satisify conditions (ii)-(iv) of Lemma \ref{lemma:finite-truncation} for the remainder of the paper.  Indeed, since the truncated 
matrix coincides with the original with probability going to $1$, it is enough for us to prove Theorems \ref{thm:main_real} and \ref{thm:main_complex} 
for the truncated matrix.  

We will also need the following lemma for controlling the expectation of the norm of $X_N X_N^\ast$.  

\begin{lemma} \label{lemma:expecation-norm}
For any $k \geq 1$, there exists a constant $C>0$ (depending only on $\sigma$, $c$, and $k$) such that 
\begin{equation*}
	\E \left[ \| X_N X_N^\ast \|^k \right] \leq C
\end{equation*}
for $N$ sufficiently large.  
\end{lemma}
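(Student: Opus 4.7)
The plan is to use the standard trace-power technique. Since $X_N X_N^*$ is positive semidefinite, with eigenvalues $\lambda_1,\ldots,\lambda_N \geq 0$, we have
\[
\|X_N X_N^*\|^{2p} = \lambda_{\max}^{2p} \leq \sum_{i=1}^N \lambda_i^{2p} = \Tr\bigl((X_N X_N^*)^{2p}\bigr)
\]
for every integer $p \geq 1$, so $\|X_N X_N^*\| \leq \Tr((X_N X_N^*)^{2p})^{1/(2p)}$. For $p \geq k/2$, Jensen's inequality applied to the concave map $x \mapsto x^{k/(2p)}$ yields
\[
\E\bigl[\|X_N X_N^*\|^k\bigr] \leq \E\bigl[\Tr\bigl((X_N X_N^*)^{2p}\bigr)\bigr]^{k/(2p)}.
\]

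The crucial input is a high-moment bound on the expected trace. After the truncation of Lemma \ref{lemma:truncation} we may assume $|(A_N)_{ij}| \leq \epsilon_N \sqrt{N}$ with $\epsilon_N \to 0$, while the entries still have mean $0$, variance $\sigma^2$, and uniformly bounded fourth moments. Under exactly these hypotheses, the Bai--Yin path-counting argument underlying Theorem 5.9 of \cite{bai-book} (the same input used to prove Lemma \ref{lemma:spectral-norm}) produces an estimate of the form
\[
\E\bigl[\Tr\bigl((X_N X_N^*)^{2p(N)}\bigr)\bigr] \leq N \cdot M^{2p(N)}
\]
for any sequence $p(N) \to \infty$ growing sufficiently slowly (e.g., $p(N) = o(N^{2/3})$), where $M$ is a constant depending only on $\sigma$ and $c$ (in fact one can take $M = \sigma^2(1+\sqrt{c})^2(1+o(1))$, but any fixed $M$ suffices for our purposes).

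Taking $p(N) = \lceil \log N \rceil$, one has $p(N) \geq k/2$ for $N$ sufficiently large, and
\[
N^{k/(2p(N))} = \exp\!\left(\frac{k}{2}\cdot\frac{\log N}{\lceil \log N \rceil}\right) \leq e^{k/2}.
\]
Combining this with the trace bound gives $\E[\|X_N X_N^*\|^k] \leq e^{k/2} M^{k}$, bounded by a constant depending only on $k$, $\sigma$, and $c$ for all sufficiently large $N$.

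The main obstacle is the high-moment trace estimate, which must hold uniformly in $N$ while the exponent $2p(N)$ is allowed to grow; here a constant-in-$p$ ratio of successive moments (rather than a factorial-type blow-up) is essential, and this is exactly what the truncation $|(A_N)_{ij}|\leq \epsilon_N\sqrt{N}$ was engineered to allow. This is by now a standard combinatorial computation in sample-covariance random matrix theory and can be quoted directly from \cite{bai-book}; the remainder of the argument is the short convexity reduction outlined above.
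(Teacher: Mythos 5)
Your argument is correct, and it takes a genuinely different route from the paper. The paper splits the expectation at the threshold $\sigma^2(1+\sqrt{c})^2 + \epsilon$: the first piece is trivially bounded, and the second is controlled by the layer-cake formula together with the quantitative \emph{tail bound} stated in Theorem 5.9 of \cite{bai-book}, which gives $\Prob(\|X_N X_N^*\| > \sigma^2(1+\sqrt{c})^2 + t) = O(N^{-k-2}(\cdot)^{-k-2})$ and hence an integrated tail contribution of order $N^{-k-2}$. You instead dominate $\|X_N X_N^*\|^{2p}$ by $\Tr((X_N X_N^*)^{2p})$, apply Jensen, and then appeal to the \emph{trace-moment estimate} $\E[\Tr((X_N X_N^*)^{2p(N)})] \leq N\cdot M^{2p(N)}$ with $p(N) = \lceil \log N\rceil$, i.e.\ you go one step inside the engine that proves Theorem 5.9 rather than citing its stated conclusion. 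Both arguments rest on the same Bai--Yin combinatorics and both require the truncation of Lemma \ref{lemma:truncation}. What the paper's route buys is a cleaner citation: Theorem 5.9 is a stated, quotable result, whereas the high-moment trace estimate with growing exponent $p(N)$ is embedded in that theorem's proof and must be extracted and re-asserted (as you acknowledge). What your route buys is a somewhat more transparent mechanism — the choice $p(N) = \lceil \log N\rceil$ immediately kills the factor $N^{1/(2p)}$, and the Jensen step is elementary — and it avoids the need for the $\epsilon$-splitting and the tail integral. Either way the lemma follows, but if you were to submit this you should cite the precise combinatorial estimate you need (e.g.\ the relevant moment bound in Chapter 5 of \cite{bai-book}) rather than invoking ``the argument underlying'' the theorem.
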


\begin{proof}
For any $\epsilon>0$, 
\begin{align*}
	\E \left[ \| X_N X_N^\ast \|^k \right] &= \E \left[ \|X_N X_N^\ast \|^k \indicator{ \|X_N X_N^\ast\| \leq \sigma^2(1+\sqrt{c})^2 + \epsilon} 
\right] \\
	& \qquad + \E \left[ \|X_N X_N^\ast \|^k \indicator{ \|X_N X_N^\ast\| > \sigma^2(1+\sqrt{c})^2 + \epsilon} \right] \\
	& \leq \left[ \sigma^2(1+\sqrt{c})^2 + \epsilon \right] ^ k + k \int_\epsilon^\infty t^{k-1} 
\Prob\left( \| X_N X_N^\ast \| > t + \sigma^2(1 + \sqrt{C})^2 \right) dt.
\end{align*}
By \cite[Theorem 5.9]{bai-book}, we have that
\begin{align*}
	\int_\epsilon^\infty &t^{k-1} \Prob\left( \| X_N X_N^\ast \| > t + \sigma^2(1 + \sqrt{C})^2 \right) dt \\
	&\qquad \leq C' N^{-k -2} \int_\epsilon^\infty t^{k-1} \left( \sigma^2(1+\sqrt{C})^2 + t - \epsilon \right)^{-k -2}dt = O(N^{-k -2})
\end{align*}
for some constant $C'>0$.  Thus,  
\begin{equation*}
	\E \left[ \| X_N X_N^\ast \|^k \right] \leq \left[ \sigma^2(1+\sqrt{c})^2 + \epsilon \right]^k + 1
\end{equation*}
for $N$ sufficiently large.  
\end{proof}

\section{ Mathematical Expectation and Variance of Resolvent Entries} \label{sect:variance}
\label{sec:estvar}
This section is devoted to the estimates of the mathematical expectation and the variance of the resolvent entries.  
Throughout the section, we will consider the real case.  The proofs in the complex case are very similar. 
It follows from Lemmas \ref{lemma:truncation} and \ref{lemma:finite-truncation} that for the purposes of the proof of Theorems 
\ref{thm:main_real} and \ref{thm:main_complex} we can assume that $A_N$ satisfies properties (i)-(iii) in Lemma \ref{lemma:truncation} and properties
(ii)-(iv) in Lemma \ref{lemma:finite-truncation}.  Indeed, such a truncated matrix coincides with $A_N$ with probability going to $1,$ and, therefore, 
if the results of Theorems \ref{thm:main_real} and \ref{thm:main_complex} hold for the truncated matrix, they also hold for $A_N.$

We begin by recalling the 
basic resolvent identity
\begin{equation}
\label{resident}
	(z I - A_2)^{-1} = (z I - A_1)^{-1} - (z I - A_1)^{-1} (A_1 -A_2) (zI - A_2)^{-1} 
\end{equation}
which holds for all $z \in \C$ where $(z I - A_1)$ and $(z I - A_2)$ are invertible.  

We will also use the decoupling formula (see for example \cite{KKP} and \cite{LytPastur}): 
for any real-valued random variable, $\xi$, with $p+2$ finite moments and $\phi$ a 
complex-valued function with $p+1$ continuous and bounded derivatives the decoupling formula is given by:
\begin{equation}
\label{decouple}
	\E(\xi \varphi(\xi)) = \sum_{a=0}^p \frac{\kappa_{a+1}}{a!} \E(\varphi^{(a)}(\xi)) + \epsilon 
\end{equation}
where $\kappa_a$ are the cumulants of $\xi$ and $\epsilon \leq C \sup_t | \varphi^{(p+1)}(t)| \E(|\xi|^{p+2})$, $C$ depends only on $p$. 
It follows from the proof of the decoupling formula in \cite{LytPastur} that
if $ |\xi|\leq K$ with probability $1,$ then the supremum in the upper bound for the error term can be taken over $t \in [-K, K].$

Recall that we denote the entries of the resolvent $R_N(z)=(zI_N - M_N)^{-1}$ of $M_N$ by $R_{ij}(z), \ 1\leq i,j\leq N.$
Using \eqref{resident}, we can compute the derivatives of the resolvent with respect to any entry
\begin{equation} \label{eq:derivative}
\begin{split}
\frac{\d R_{ij} }{\d X_{kl}} &= R_{ik} (X^* R)_{lj} + (RX)_{il} R_{kj}.
\end{split}
\end{equation} 

We now use \eqref{decouple} and \eqref{eq:derivative} to compute the expectation and variance of the resolvent entries.  

\begin{proposition}
\label{expandvar}
Let $M_N =\frac{1}{N} A_N A_N^* $ be a random real (complex) sample covariance matrix satisfying condition 
\textbf{C2} (\textbf{C1})  and $R_N(z) = (zI_N - M_N)^{-1}$.  Then 
\begin{equation}
\begin{split}
\label{expii}
&\E[R_{ii}(z)] = g_{\sigma,c_N}(z) + O\left(\frac{P_6(|\Im(z)|^{-1})}{ N}\right)  \\
  & \text{ for } 1 \leq i \leq N, \text{ uniformly on bounded sets of } \C \setminus\R
\end{split}
\end{equation}
\begin{equation}
\begin{split}
\label{expik}
&\E[R_{ik}(z)] =  O\left(\frac{P_5(|\Im(z)|^{-1})}{ N}\right) \\
&\text{ for } 1 \leq i \not= k \leq N, \text{ uniformly on } z \in \C\setminus\R
\end{split}
\end{equation}
\begin{equation}
\begin{split}
\label{varij}
 &\V[R_{ij}(z)] = O\left(\frac{P_4(|\Im(z)|^{-1})  \E \left[ P_{10}(\|X_N\|) \*( \|R_N(z)\|^2 + \| R_N(z) \|^{3/2}) \right]}{ N}\right)  \\
 & \text{ for } 1 \leq i , j \leq N, \text{ uniformly on } z \in \C \setminus \R.
\end{split}
\end{equation}
\end{proposition}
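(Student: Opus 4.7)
The plan is to turn the resolvent identity $(zI_N-M_N)R_N(z)=I_N$ into an approximate self-consistent equation for $\E R_{ii}(z)$ and compare it to the Marchenko--Pastur equation \eqref{utro11}. The $(i,j)$ entry of the identity reads $zR_{ij}=\delta_{ij}+\sum_{k=1}^{n}X_{ik}(X^*R)_{kj}$. For the diagonal, take expectations and apply the decoupling formula \eqref{decouple} to each $\E[X_{ik}(X^*R)_{ki}]$ with $\xi=X_{ik}$, $\varphi=(X^*R)_{ki}$. Since $\var(X_{ik})=\sigma^2/N$ and the $p$-th cumulant of $X_{ik}$ is of order $N^{-p/2}$, while $|X_{ik}|\le\epsilon_N\sqrt{N}$ and $\|R_N\|\le|\Im z|^{-1}$ after the truncation of Section~\ref{sect:truncation}, only the $\kappa_2$ term survives at leading order; the higher-cumulant remainder is $O(P_k(|\Im z|^{-1})/N)$ uniformly on bounded subsets of $\C\setminus\R$.

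Using the derivative formula \eqref{eq:derivative}, a direct computation gives
\[
\partial_{X_{ik}}(X^*R)_{ki}=R_{ii}+[(X^*R)_{ki}]^2+R_{ii}(X^*RX)_{kk},
\]
and summing over $k$ while invoking $M_NR_N=zR_N-I_N$ rewrites the resulting expressions as $(R_NM_NR_N)_{ii}=z(R_N^2)_{ii}-R_{ii}$ and $\Tr(R_NM_N)=z\Tr R_N-N$. Collecting terms one obtains
\[
\bigl(z-\sigma^2(c_N-1)-\sigma^2 z\,\E\tr_N R_N\bigr)\E R_{ii}(z)=1+O\!\left(\frac{P_k(|\Im z|^{-1})}{N}\right),
\]
where the error absorbs the higher-cumulant tail, the explicit $\tfrac{\sigma^2 z}{N}\E(R_N^2)_{ii}$ piece, and the covariance $\sigma^2 z\,\cov(R_{ii},\tr_NR_N)$ (controlled by a preliminary version of \eqref{varij}, i.e.\ concentration of the empirical trace). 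Averaging over $i$ first yields a closed equation for $g_N(z)=\E\tr_NR_N$ that agrees with \eqref{utro11} modulo $O(1/N)$; stability of this quadratic — the Marchenko--Pastur pre-factor stays bounded away from zero off the real axis — gives $g_N(z)=g_{\sigma,c_N}(z)+O(P_2(|\Im z|^{-1})/N)$, and substituting back establishes \eqref{expii}. The off-diagonal estimate \eqref{expik} is obtained by the identical computation starting from $zR_{ik}=\sum_jX_{ij}(X^*R)_{jk}$ for $i\neq k$: because the source $\delta_{ik}$ is absent, the conclusion $\E R_{ik}=O(P_5(|\Im z|^{-1})/N)$ is immediate without any self-consistent inversion.

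For the variance \eqref{varij}, I would write $\V[R_{ij}]=\E[\overline{R_{ij}}(R_{ij}-\E R_{ij})]$ and apply \eqref{decouple} once more, peeling one matrix entry off $R_{ij}$ via $zR_{ij}=\delta_{ij}+\sum_kX_{ik}(X^*R)_{kj}$ and decoupling each resulting product. The derivative identity \eqref{eq:derivative} produces pairs of the form $R_{ik}(X^*R)_{lj}+(RX)_{il}R_{kj}$; the bounds $\sum_l|(X^*R)_{lj}|^2=(R_NM_NR_N)_{jj}\le\|M_N\|\|R_N\|^2$, $\sum_l|(RX)_{il}|^2\le\|M_N\|\|R_N\|^2$, and $\sum_k|R_{ik}|^2\le\|R_N\|^2$, combined with Cauchy--Schwarz applied asymmetrically to mixed products, yield the combination $\|R_N\|^2+\|R_N\|^{3/2}$ (the half-integer power arises when one factor is bounded in operator norm and the other through an $\ell^2$-sum). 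Collecting the $\sigma^2/N$ prefactor from $\kappa_2(X_{ik})$ and using Lemma~\ref{lemma:expecation-norm} to handle the $\|X_N\|$-dependence produces the claimed bound.

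The hardest step is stability of the self-consistent equation: one needs a polynomial-in-$|\Im z|^{-1}$ lower bound on $|z-\sigma^2(c_N-1)-\sigma^2 zg_N(z)|$ on bounded subsets of $\C\setminus\R$, together with a way to break the circularity that ties this estimate to a variance bound that itself depends on control of $\E R_{ii}$. The standard remedy is a bootstrap: establish $g_N(z)\to g_{\sigma,c_N}(z)$ weakly via Theorem~\ref{thm:mp}, use the explicit form of $g_{\sigma,c_N}$ to verify that the MP quadratic has no other branch on $\C\setminus\R$, derive the weak (variance-free) bound on $\E R_{ii}$ via this, and only then propagate the quantitative $O(P_k(|\Im z|^{-1})/N)$ error through the perturbation argument.
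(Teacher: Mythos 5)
Your skeleton — resolvent identity, decoupling formula, $R_N M_N = zR_N - I_N$ simplification, approximate Marchenko–Pastur self-consistent equation, trace concentration to control $\cov(R_{ii},\tr_N R_N)$, and a second-moment version of the same machinery for the variance — matches the paper's proof. The variance treatment ($\E[R_{ij}(z)R_{ij}(\bar z)]$ expanded by decoupling, subtracting the product of first moments, mixed Cauchy–Schwarz to get $\|R_N\|^2+\|R_N\|^{3/2}$) is also what the paper does. However, there is a genuine gap in the stability step.

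You assert that the Marchenko–Pastur prefactor $z-\sigma^2 z g_N(z)-\sigma^2(c_N-1)$ (equivalently $|g_N-\sigma^2 g_N^2|$) "stays bounded away from zero off the real axis" on bounded sets. This is false: the MP quadratic degenerates at the spectral edges $u_\pm=\sigma^2(1\pm\sqrt c)^2$, so as $z$ approaches an edge from $\C\setminus\R$ the prefactor can be arbitrarily small. The statement \eqref{expii} is uniform over all of a bounded set in $\C\setminus\R$, including $z$ with $|\Im z|\to 0$ near the edges, and the polynomial error $P_6(|\Im z|^{-1})/N$ is exactly designed to cope with that. The missing device is the region-splitting of the paper: define $\mathcal{Q}_N=\{|z|<T+1,\ |\Im z|>LN^{-1/5}\}$ (and $\mathcal{O}_N=\{|\Im z|>LN^{-1/4}\}$ for the off-diagonal and variance). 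On the complement of $\mathcal{Q}_N$ the trivial Stieltjes bound $|\E R_{ii}-g_{\sigma,c_N}|\le 2|\Im z|^{-1}$ is already $O(|\Im z|^{-6}/N)$, so nothing needs to be proved there. On $\mathcal{Q}_N$ one can then prove, by contradiction using the approximate equation \eqref{masterdiag1} with an $O(P_4/N)$ error made $o(1)$ by choice of $L$, that $|g_N-\sigma^2 g_N^2|$ is bounded below by a constant (Lemma \ref{lowerb}); this is not a consequence of boundedness off the real axis but of the interplay between the $N^{-1/5}$ mesoscopic cutoff and the $O(1/N)$ error term.

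Your proposed bootstrap (weak convergence $g_N\to g_{\sigma,c_N}$ via Theorem \ref{thm:mp}, then "no other branch", then propagate) is too qualitative to deliver a polynomial lower bound uniform in $|\Im z|$ on a shrinking region; moreover, the worry about circularity with \eqref{varij} is not actually present, since the paper's error control in \eqref{masterdiag1} rests on Shcherbina's trace-variance bound \eqref{tracevarbound} and the trivial $\V(R_{ij})\le|\Im z|^{-1/2}\E\|R_N\|^{3/2}$, not on \eqref{varij}. Finally, after obtaining the approximate equation one still needs the comparison argument via $s_N(z):=\frac{1+\sigma^2 g_N(c_N-1)}{g_N-\sigma^2 g_N^2}$, showing $g_N(z)=g_{\sigma,c_N}(s_N(z))$ (which is where the "no other branch" fact enters) and then integrating $g_{\sigma,c_N}'$ over the $O(P_4/N)$ shift $s_N(z)-z$; your claimed $O(P_2(|\Im z|^{-1})/N)$ for $g_N-g_{\sigma,c_N}$ undercounts the powers of $|\Im z|^{-1}$ that accumulate in this step, which is why the paper ends up with $P_6$ in \eqref{expii}.
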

Here and throughout the paper $P_k$ denotes a polynomial of degree $k$ with non-negative coefficients. 

In \eqref{varij} we have included the norm of the resolvent in the error estimate. This will be useful in the proof of Proposition \ref{varijf}.

\begin{proof}
The following inequalities will be useful in our calculations:
\begin{equation}
\label{ineq}
|R_{ik}(z)| \leq |\Im(z)|^{-1},   \quad   \sum_{j=1}^N |R_{ij}(z)|^2 \leq \|R_N(z)\|^2 \leq |\Im(z)|^{-2} 
\end{equation}
We first prove \eqref{expii} and \eqref{expik}.
We define the following sets on the complex plane. Let $T$ be an arbitrary large number. Let $L$ be a sufficiently large constant, to be chosen later.

\begin{equation}
\label{QandO}
\mathcal{Q}_N := \{ z : |z|<T+1 \text{ and } |\Im(z)| > L N^{-1/5} \} \quad \mathcal{O}_N := \{ z : |\Im(z)| > L N^{-1/4} \}
\end{equation}

Note that if $z \in \mathcal{Q}_N^c \cap \{|z| < T+1\}$ then $|\Im(z)|^{5} \leq L^5 N^{-1}$. When combined with \eqref{STbound} this implies 
\begin{equation}
\label{QSTbound}
| \E[R_{ii}(z)] - g_{\sigma,c_N}(z) | \leq 2|\Im(z)|^{-1} = O\left(\frac{|\Im(z)|^{-6}}{N} \right).
\end{equation}
Similarly, if $z \in \mathcal{O}_N^c$ then   
\begin{equation}
\label{OSTbound}
| \E[R_{ik}(z)]  | \leq |\Im(z)|^{-1} = O\left(\frac{|\Im(z)|^{-5}}{N} \right).
\end{equation}

For the remainder of the proof of \eqref{expii} and \eqref{expik} we will assume that $z$ is in $\mathcal{Q_N}$ and $\mathcal{O_N}$, respectively. 

The proof of both statements begins with the resolvent identity \eqref{resident}, and then an application of the decoupling formula \eqref{decouple}.
\begin{equation}
\label{master}
\begin{split}
& z\E[R_{ik}(z)] 
= \delta_{ik} +  \sum_{j=1}^N \sum_{l=1}^n \E[ R_{ij}(z) X_{jl}  X_{kl} ] 
\\
& = \delta_{ik} + \sigma^2 \E[R_{ik}(z) \tr_N( R_N(z) X X^*)] + \frac{\sigma^2}{N} \E[ (R_N(z)X X^*R(z))_{ik}  ] + 
\frac{n\sigma^2}{N} \E[ R_{ik}(z) ] + r_N,
\end{split}
\end{equation}
where $r_N$ is the third cumulant term coming from $p=2$ and the error from truncating at $p=2$.

From the definition of the resolvent we have $R_N(z)(z I_N - XX^*) = I_N$, which implies $R_N(z)XX^* = zR_N(z) - I_N$. Applying this identity to 
\eqref{master} yields:
\begin{equation}
\label{master1}
\begin{split}
z\E[R_{ik}(z)] =&  \delta_{ik} +  \sigma^2z \E[R_{ik}(z) \tr_N( R_N(z)) ] - \sigma^2\E[R_{ik}(z)]  +  \\
& ~~~ \frac{\sigma^2}{N} \E[ (R_N(z)X X^*R_N(z))_{ik}  ] + \frac{n\sigma^2}{N} \E[ R_{ik}(z)] + r_N. 
\end{split} 
\end{equation}
We begin with the following lemma:
\begin{lemma}
\label{error1}
For $z \in \C \setminus \R$:
\begin{align}
\label{houston}
& Cov[R_{ij}(z), \tr_N(R_N(z)) ] \leq \frac{ P_{2}(|\Im(z)|^{-1})\E[\|R_N(z)\|^{3/2}]}{N},\\
\label{atlantaz}
& r_N \leq  \frac{P_{4}(|\Im(z)|^{-1})}{N}.
\end{align}
Additionally, for $z \in \mathcal{O}_N$:
\begin{equation}
\label{atlantaR}
r_N \leq  \frac{P_{2}(|\Im(z)|^{-1})\E[ P_8(\|X_N\|) \*( \|R_N(z)\|^2 + \| R_N(z) \|^{3/2})}{N}.
\end{equation}
\end{lemma}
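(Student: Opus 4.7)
My plan is to prove the three estimates together by repeated application of the decoupling formula \eqref{decouple} to the mean-zero entries $X_{jl}$ and systematic use of the resolvent derivative identity \eqref{eq:derivative}. The guiding principle is that index sums over $(j,l)\in\{1,\ldots,N\}\times\{1,\ldots,n\}$ of monomials in the entries of $R_N(z)$ and $X_N$ reassemble, via identities such as $\sum_l X_{al}X_{bl} = (X_N X_N^*)_{ab}$ and $\sum_b |R_{ab}|^2 \leq \|R_N\|^2$, into matrix expressions whose operator norms are controlled by $\|R_N\|$, $\|X_N\|$, or crudely by $|\Im z|^{-1}$ via \eqref{ineq}. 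Throughout the argument, $\kappa_3(X_{jl}) = \kappa_3(A_{jl})/N^{3/2}$ and $\E|X_{jl}|^4 = O(N^{-2})$ uniformly in $(j,l)$, thanks to the uniformly bounded fourth-moment property (iii) of Lemma \ref{lemma:truncation}.

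For \eqref{houston}, since $\tr_N R = N^{-1}\sum_k R_{kk}$, it suffices to bound $\cov(R_{ij}, R_{kk})$ uniformly in $k$. I would use \eqref{master1} with $i = j = k$ to expand $R_{kk} - \E R_{kk}$ as a weighted sum of fluctuating monomials, multiply by $R_{ij}$, and apply \eqref{decouple} again to the $X_{ab}$ variables. The second-cumulant contributions take the form $\frac{\sigma^2}{N}\sum_{a,b}\E[(\partial_{X_{ab}}R_{ij})(\partial_{X_{ab}}R_{kk})]$; expanding these via \eqref{eq:derivative} collapses the $(a,b)$-sums into bilinear forms dominated by $\|R_N\|^2$, and splitting via H\"older's inequality---one factor pointwise bounded by $\|R_N\|$, the other a square-sum of resolvent entries taken to the $1/2$ power---yields the $\|R_N\|^{3/2}$ in \eqref{houston}. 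Higher-cumulant contributions are of strictly smaller order and handled analogously.

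For $r_N$, by definition
\[r_N = \sum_{j,l}\Bigl[\tfrac{\kappa_3(X_{jl})}{2}\,\E\bigl(\partial^2_{X_{jl}}(R_{ij}X_{kl})\bigr) + \epsilon_{jl}\Bigr], \quad |\epsilon_{jl}|\leq C\sup_{|t|\leq \epsilon_N}\bigl|\partial^3_{X_{jl}}(R_{ij}X_{kl})\bigr|_{X_{jl}=t}\cdot \E|X_{jl}|^4.\]
Iterating \eqref{eq:derivative} expands each of these derivatives into $O(1)$ monomials in entries of $R_N$ and $X_N$. Summing over $(j,l)$ and reassembling index sums into traces and norms, the $N^{-3/2}$ prefactor from $\kappa_3$ (respectively the $N^{-2}$ from $\E|X_{jl}|^4$) combines with the ensuing matrix-norm bounds to give either the crude bound \eqref{atlantaz} (overestimating every norm by $|\Im z|^{-1}$ via \eqref{ineq}, in which case $\|X_N\|^2$ is controlled through $M_NR_N = zR_N - I_N$) or the sharper bound \eqref{atlantaR} on $\mathcal{O}_N$ (retaining $\|R_N\|$ and $\|X_N\|$ explicitly). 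The restriction $z \in \mathcal{O}_N$ enters only to ensure that residual factors of the form $\epsilon_N |\Im z|^{-c}$ produced by the truncation error remain bounded.

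The main obstacle will be the enumeration and bookkeeping of the monomials produced by $\partial^2_{X_{jl}}$ and $\partial^3_{X_{jl}}$ applied to $R_{ij}X_{kl}$, and the verification that each $(j,l)$-sum reassembles cleanly into a matrix expression with the correct exponents in $\|R_N\|$, $\|X_N\|$, and $|\Im z|^{-1}$. A secondary subtlety is that the truncation-error contribution carries the less favourable prefactor $N^{-2}$ rather than $N^{-3/2}$, so an extra factor of $N^{1/2}$ must be extracted from the matrix-norm reassembly; this is precisely why the sharper estimate \eqref{atlantaR} needs the restriction $z \in \mathcal{O}_N$, while the crude \eqref{atlantaz} must absorb this factor into the larger polynomial $P_4(|\Im z|^{-1})$.
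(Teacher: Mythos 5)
The paper's proof of \eqref{houston} is fundamentally different from yours, and considerably shorter. It does not expand $\cov(R_{ij},\tr_N R_N)$ by a second round of decoupling at all: it quotes the a priori estimates of Proposition 4 in \cite{Sh},
\[
\V(\tr_N R_N(z)) \leq \frac{|\Im z|^{-4}}{N^2}, \qquad \V(\tr_N R_N(z)) \leq \frac{|\Im z|^{-7/2}\,\E[\|R_N(z)\|]^{3/2}}{N^2},
\]
pairs them with the elementary bound $\V(R_{ij}) \leq |\Im z|^{-1/2}\,\E[\|R_N\|]^{3/2}$ coming from \eqref{ineq}, and finishes with one application of Cauchy--Schwarz. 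The $1/N$ decay of the covariance is thus inherited from the $1/N^2$ self-averaging of the trace, with no bookkeeping of derivative monomials. Your route---reducing to $\cov(R_{ij},R_{kk})$ uniformly in $k$ and re-expanding via \eqref{decouple}---is a genuinely different and heavier computation, and it starts from a misstep: \eqref{master1} is an identity for the \emph{expectation} $\E R_{ik}$, so setting $i=j=k$ does not ``expand $R_{kk}-\E R_{kk}$'' as you claim. You would effectively be re-deriving the Shcherbina bound from scratch, and it is not evident that your H\"older device produces the $\E[\|R_N\|^{3/2}]$ factor cleanly without circular reliance on \eqref{varij}, which is established only later (and by way of the companion Lemma \ref{error2}).

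For $r_N$ your outline does match the paper's: write out the third-cumulant term explicitly, reassemble the $l$-sum into trace/norm expressions via \eqref{ineq} and Cauchy--Schwarz, then control the truncation remainder. Two corrections of detail. First, your power counting for the fourth-moment remainder is off: the prefactor $\E|X_{kl}|^4 = O(N^{-2})$ combined with the $l$-sum of length $n=O(N)$ already lands at $O(1/N)$, so there is no ``extra $N^{1/2}$ to extract''; what actually differentiates \eqref{atlantaz} from \eqref{atlantaR} is whether the norms $\|R_N\|$, $\|X_N\|$ are retained explicitly or crudely dominated via \eqref{ineq} and Lemma \ref{lemma:expecation-norm}. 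Second, the role of $\mathcal{O}_N$ is more specific than ``residual factors stay bounded'': the remainder in \eqref{decouple} is a supremum over rank-two perturbations $X'=X+xE_{kl}$ with $|x|\leq\epsilon_N N^{-1/4}$, and the paper needs the perturbation estimate $\|R'_N(z)\|\leq \|R_N(z)\|\,(1+|x|\,\|X_N\|\,|\Im z|^{-1})$ from \eqref{dinamo2}; the restriction $|\Im z|\geq L N^{-1/4}$ is precisely what turns the correction factor into $1+\epsilon_N\|X_N\|$, i.e.\ keeps $\|R'_N\|$ comparable to $\|R_N\|$.
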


\begin{proof}
To prove \eqref{houston} we begin with the following bounds from Proposition 4 in \cite{Sh}:
\begin{equation}
\label{tracevarbound}
 \V(\tr_N(R_N(z))) \leq \frac{|\Im(z)|^{-4}}{N^2}, \quad  \V(\tr_N(R_N(z))) \leq \frac{|\Im(z)|^{-7/2} \E[\|R_N(z)\|]^{3/2}}{N^{2}}.
\end{equation}
It follows from the proof of Proposition 4 in \cite{Sh} that these bounds are valid provided the fourth moments are uniformly bounded (\cite{Sh1}).
Additionally, from \eqref{ineq} we have
\begin{equation}
 \V(R_{ij}(z)) \leq |\Im(z)|^{-1/2} \E[\|R_N(z)\|]^{3/2}.
\end{equation}
Using Cauchy-Schwarz this implies
\begin{equation}
 Cov(R_{ij}(z), \tr_N(R_N(z))) \leq \frac{P_2(|\Im(z)|^{-1}) \E[\|R_N(z)\|^{3/2}]}{N}
\end{equation}
as desired.

Now we prove \eqref{atlantaR}; the argument along with Lemma \ref{lemma:expecation-norm} 
can be modified to prove \eqref{atlantaz}. The third cumulant term in the decoupling formula is:
\begin{equation*}
\begin{split}
& \frac{1}{2 N^{3/2}} \sum_{j=1}^N \sum_{l=1}^n \kappa_3((A_N)_{kl}) \E\left[\frac{\d^2 R_{ij}(z) X_{jl}}{\d X_{kl}^2} \right] \\
& =\sum_{j=1}^N \sum_{l=1}^n \kappa_3((A_N)_{kl}) [R_{ik}(z) (X^* R(z)X)_{ll} R_{kj}(z) X_{jl} \\
& =  \frac{1}{N^{3/2}} \sum_{l=1}^n \kappa_3((A_N)_{kl})\E[R_{ik}(z) (X^* R_N(z)X)_{ll} (R_N(z)X)_{kl} \\
& ~~~+ (R_N(z)X)_{il} (R_N(z)X)_{kl} (R_N(z)X)_{kl}  + R_{ik}(z) (R_N(z)X)_{kl} \\
& ~~~ + R_{ik}(z) (X^* R_N(z))_{lk} (X^* R_N(z) X)_{ll}  + (R_N(z)X)_{il} R_{kk}(z)(X^*R_N(z) X)_{ll}],
 \end{split}
\end{equation*}
where $\kappa_3((A_N)_{kl})$ is the third cumulant of $(A_N)_{kl}$. By condition \textbf{C2}, the $\kappa_3((A_N)_{kl})$'s are uniformly bounded.\\
Using \eqref{ineq} and the Cauchy-Schwarz inequality this term is seen to be \
$$O\left(\frac{|\Im(z)|^{-1} \E[P_4(\|X_N\|)\*\|R_N(z)\|^2]}{N}\right).$$

The truncation error is bounded from above by a finitely many sums of the following form
\begin{equation}
\label{dinamo}
\frac{C \* m_4}{N^2} \* \sum_{l=1}^n  \sup \E |R'_{ab}(z) (R_N'(z) X')_{cd}^{\alpha}(R_N' (z) X')_{ef}^{\beta} (X'^{*} R_N'(z) )_{gh}^{\gamma} (X'^{*} 
R_N' X')_{qr}^{\delta} |,
\end{equation}
where the $\sup$ is over all rank two perturbations of $X$ of the form $X' = X + x E_{kl}$ where $(E_{kl})_{ij} = \delta_{ik}\delta_{jl}+
\delta_{il}\delta_{ij}$ and $R_N'(z) = (z I_N - X' X'^{*})^{-1}$. Additionally, $\alpha + \beta + \gamma +2\delta\leq 4$ and each of 
$a,b,c,d,e,f,g,h,q,r $ are one of $i,l,k$.
The bound (\ref{atlantaz}) then immediately follows from (\ref{ineq}) and Lemma \ref{lemma:expecation-norm}.

To prove the bound (\ref{atlantaR}), we can assume by (iii) of 
Lemma \ref{lemma:finite-truncation} that $|x| \leq \epsilon_N N^{-1/4}$.  Then
\begin{equation} 
\label{dinamo1}
\|X_N'\| = \|X_N\| + o(1).
\end{equation}
 Additionally, 
\begin{equation*}
R_N'(z)= R_N(z) +R_N(z) \* (xE_{kl}X_N^* + x \*X_N\* E_{lk} + x^2 E_{kl}E_{lk}  )\* R_N'(z). 
\end{equation*}
Thus,
\begin{equation}
\label{dinamo2}
\|R_N'(z)\|\leq \|R_N(z)\| \* (1 + |x| \*\frac{1}{|\Im z|}\* \|X_N\|) \leq \|R_N(z)\| \* (1 + \epsilon_N \* \|X_N\|).
\end{equation}
Using (\ref{dinamo}-\ref{dinamo2}), one obtains (\ref{atlantaR}).
\end{proof}

It follows from \eqref{master1} and Lemma \ref{error1} that for $i = k$
\begin{equation}
\begin{split}
\label{masterdiag}
(z - \sigma^2 z g_N(z) - \sigma^2 c_N + \sigma^2) \*\E R_{ii}(z) &= 1 + O\left(\frac{P_4(|\Im(z)|^{-1})}{N}\right).\\
\end{split} 
\end{equation}
Summing over $i$ and dividing by $N$ gives
\begin{equation}
\begin{split}
\label{masterdiag1}
(z - \sigma^2 z g_N(z) - \sigma^2 c_N + \sigma^2) g_N(z) &= 1 + O\left(\frac{P_4(|\Im(z)|^{-1})}{N}\right).
\end{split} 
\end{equation}
Additionally if $i \not= k$
\begin{equation}
\begin{split}
\label{masteroffdiag}
(z - \sigma^2 z g_N(z)- \sigma^2 c_N + \sigma^2)  \E[  R_{ik}(z)] &= O\left(\frac{P_4(|\Im(z)|^{-1})}{N}\right)
\end{split} 
\end{equation}
Now we use \eqref{masterdiag} and \eqref{masterdiag1} and the following lemma to complete the proof of \eqref{expii}. 
\begin{lemma}
\label{lowerb}
On $\mathcal{Q}_N$, $|g_N(z) - \sigma^2  g_N^2(z)| $ is uniformly bounded in $z$ and $N$ from below by a positive constant. 
\end{lemma}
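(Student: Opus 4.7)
The plan is to rewrite \eqref{masterdiag1} in the form
\[
z h_N(z) = 1 + \sigma^2(c_N-1)\, g_N(z) + O\!\bigl(P_4(|\Im z|^{-1})/N\bigr),
\]
where $h_N(z) := g_N(z) - \sigma^2 g_N(z)^2$. On $\mathcal{Q}_N$ the error is $O(L^{-4} N^{-1/5})$, uniformly in $z$, and since $|z|\le T+1$ it will suffice to produce a uniform positive lower bound on $|1 + \sigma^2(c_N-1) g_N(z)|$ on $\mathcal{Q}_N$ for all sufficiently large $N$; for the finitely many remaining $N$, continuity and pointwise nonvanishing of $h_N$ on the compact set $\overline{\mathcal{Q}_N}$ give a positive lower bound for each fixed $N$.

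The pointwise nonvanishing of $h_N = g_N(1 - \sigma^2 g_N)$ on $\{\Im z \ne 0\}$ comes from $g_N$ being the Stieltjes transform of $\E \mu_{M_N}$, a probability measure on $[0,\infty)$: with the paper's convention, $\Im g_N(z)$ has sign opposite to $\Im z$, so both $g_N$ and $1-\sigma^2 g_N$ are nonzero, hence so is their product. To upgrade this to a uniform bound for large $N$, I would argue by contradiction: assume $N_k \to \infty$ and $z_{N_k} \in \mathcal{Q}_{N_k}$ with $h_{N_k}(z_{N_k}) \to 0$, and pass to subsequences so that $z_{N_k} \to z_*$ with $|z_*|\le T+1$ and $\gamma := \lim g_{N_k}(z_{N_k})$ exists in $\widehat{\C}$.

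If $\Im z_* \ne 0$, Theorem \ref{thm:mp} together with Vitali's convergence theorem identifies $\gamma = g_{\sigma,c}(z_*)$ and $h_{N_k}(z_{N_k}) \to g_{\sigma,c}(z_*)(1-\sigma^2 g_{\sigma,c}(z_*)) \ne 0$, a contradiction. If $\Im z_* = 0$, taking the limit in the displayed equation forces $1 + \sigma^2(c-1)\gamma = 0$; this is absurd for $c=1$, and for $c \ne 1$ pins $\gamma = 1/(\sigma^2(1-c)) \in \R$. Substituting this value into the Marchenko--Pastur quadratic $z\sigma^2 g^2 + (\sigma^2(c-1)-z)g + 1 = 0$ shows it is solution-compatible only at $z_*=0$ with $c>1$; the case $c<1$ is excluded because the pole of $g_{\sigma,c}$ at $0$ forces $|g_N| \to \infty$ and $|h_N| \sim \sigma^2|g_N|^2 \to \infty$, while the subcases $z_*\in [u_-,u_+]$ are excluded by $g_{\sigma,c}(u_\pm) = 1/(\sigma^2(1\pm\sqrt c)) \ne 1/(\sigma^2(1-c))$ at the edges and a strictly nonzero $\Im g_{\sigma,c}(z_*+i0)$ in the bulk. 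In the surviving case $z_*=0$, $c>1$, one has $0 \notin \text{supp}(\mu_{\sigma,c})$, so Marchenko--Pastur convergence extends continuously across $z_*=0$ and yields $h_{N_k}(z_{N_k}) \to -c/(\sigma^2(c-1)^2) \ne 0$, again contradicting $h_{N_k}(z_{N_k})\to 0$.

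The hard part is the case $\Im z_* = 0$, where Stieltjes-transform convergence is not uniform on neighborhoods meeting the real axis, so a priori $g_{N_k}(z_{N_k})$ could approach any limit in $\widehat{\C}$. The approximate Marchenko--Pastur equation \eqref{masterdiag1} is the critical tool here, since it constrains the candidate limit $\gamma$ through an algebraic identity that remains valid all the way down to the scale $|\Im z| \gtrsim N^{-1/5}$.
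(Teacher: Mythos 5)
Your approach is genuinely different from the paper's. The paper avoids compactness altogether: it assumes $|g_N - \sigma^2 g_N^2| < \delta^2$, notes that then either $|g_N| < \delta$ or $|1-\sigma^2 g_N| < \delta$, and in each case feeds this directly into \eqref{masterdiag1} to contradict the right-hand side $1 + O(P_4(|\Im z|^{-1})/N)$ once $\delta$ is small and $L$ is large. You instead pass to subsequences $z_{N_k}\to z_*$, $g_{N_k}(z_{N_k})\to\gamma$, and invoke Vitali/Montel to handle $\Im z_*\ne 0$ — this is sound. Both arguments rest on the same key input, the approximate Marchenko--Pastur relation \eqref{masterdiag1}.

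However, your $\Im z_*=0$ case has gaps. The $c<1$ branch asserts ``the pole of $g_{\sigma,c}$ at $0$ forces $|g_N|\to\infty$,'' but this confuses the limit function with the sequence: you yourself observe that Stieltjes convergence is not uniform near $\R$, so $g_{N_k}(z_{N_k})\to g_{\sigma,c}(z_*)$ is precisely what you do \emph{not} know. Even using the atom of $\E\mu_{M_N}$ at $0$ (weight $\ge 1-c_N$), its contribution $(1-c_N)/z_{N_k}$ and the continuous part of the integral both admit only the bound $|\Im z_{N_k}|^{-1}$, so no blow-up of $g_{N_k}(z_{N_k})$ is forced along arbitrary subsequences in $\mathcal{Q}_{N_k}$. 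Similarly, the $c>1$ claim that ``Marchenko--Pastur convergence extends continuously across $z_*=0$'' would require a statement about $\E\mu_{M_N}$ placing no mass near $0$, which is stronger than what Theorem \ref{thm:mp} provides. These detours are in fact unnecessary: once $\gamma$ is finite, continuity gives $h_{N_k}\to\gamma(1-\sigma^2\gamma)$ directly, so $\gamma(1-\sigma^2\gamma)=0$, i.e.\ $\gamma\in\{0,1/\sigma^2\}$; on the other hand the limit of your displayed relation forces $1+\sigma^2(c-1)\gamma=0$, i.e.\ $\gamma=1/(\sigma^2(1-c))$ (impossible for $c=1$). Since $1/(\sigma^2(1-c))$ equals neither $0$ nor $1/\sigma^2$ for any $c\in(0,\infty)$, you get the contradiction immediately, with no need to determine $z_*$, invoke the MP quadratic, or separate $c\gtrless 1$. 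I'd also drop the opening paragraph's claim that a lower bound on $|1+\sigma^2(c_N-1)g_N|$ ``will suffice'': you never establish that bound, and the compactness argument you actually run addresses $|h_N|$ directly, so the claim is a false start.
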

\begin{proof}
Assume it is not, then for any $\delta > 0$ there would exist a $z$ such that $|g_N(z) - \sigma^2  g_N^2(z) |< \delta^2$ 
which in turn implies $|g_N(z)| < \delta$ or $|\sigma^2 g_N(z) -1| < \delta$. But $|g_N(z)| < \delta$ contradicts \eqref{masterdiag1} 
if $\delta$ is sufficiently small once
$L$ from \eqref{QandO} is chosen to make $ O\left(\frac{P_4(|\Im(z)|^{-1})}{N}\right)$ small enough because 
$(z - \sigma^2 z g_N(z) - \sigma^2 c_N + \sigma^2) $ is bounded on $\mathcal{Q}_N$.

On the other hand if $|\sigma^2 g_N(z) - 1| < \delta$ then 
\[ |(z - \sigma^2 z g_N(z) - \sigma^2 c_N + \sigma^2) g_N(z) - 1| \geq \Big|  |z g_N(z) - \sigma^2 z g^2_N(z)| -  | \sigma^2 (1-c_N ) g_N(z) - 1|    
\Big|.\]
but
\[|z g_N(z) - \sigma^2 z g^2_N(z)| < (T+1)\delta \]
\[ |(\sigma^2 (1-c_N) g_N(z) - 1)+ c_N| = |(1-c_N)(\sigma^2 g_N(z) -1)| \leq |(1-c_N) \delta| \]
So for $\delta$ small and $L$ sufficiently large we reach a contradiction with (\ref{masterdiag1}).
\end{proof}

Let $s_N(z) := \frac{1 + \sigma^2 g_N(z) ( c_N -1)}{g_N(z) - \sigma^2  g_N^2(z) }$ then by Lemma \ref{lowerb} and \eqref{masterdiag} we have:
\begin{equation}
\label{approxeq}
\begin{split}
s_N(z) - z =  \frac{1 + \sigma^2 g_N(z) ( c_N -1)}{g_N(z) - \sigma^2  g_N^2(z) } - z =O\left(\frac{P_4(|\Im(z)|^{-1})}{N}\right)
\end{split} 
\end{equation}

Finally, for  $z \in \mathcal{Q}_N$, $g_N(z) = g_{\sigma,c_N}(s_N(z)),$ which can be seen by evaluating
\[ z \sigma^2  g_{\sigma,c_N}^2(z) +(\sigma^2(c_N-1) -z)g_{\sigma,c_N}(z) + 1 = 0\]
at $s_N(z)$. This yields:
\[ \frac{1 + \sigma^2 g_N(z)  (c_N -1)}{g_N(z) - \sigma^2  g_N^2(z) } \left(\sigma^2  g_{\sigma,c_N}^2(s_N(z)) - g_{\sigma,c_N} (s_N) \right) +
\sigma^2(c_N-1) g_\sigma^2(s_N(z)) +1=0 \]
Rearranging this equation and applying the estimates in \eqref{ineq} gives:
\begin{equation}
\label{durham}
\big( g_{\sigma,c_N}(s_N(z))-g_N(z) \big)\left(\frac{2\sigma^2}{|\Im(z)|^{-1}} + \frac{\sigma^4(c_N-1)}{|\Im(z)|^{-2}} \right)  
\geq \big( g_{\sigma,c_N}(s_N(z))-g_N(z) \big).
   \end{equation}
So $g_{\sigma,c_N}(s_N(z))=g_N(z)$ for sufficiently large $z$ and then on $\mathcal{Q_N}$ by analytic continuation.
 
Combining \eqref{durham} and \eqref{approxeq} gives:  
\[ |g_N(z) - g_{\sigma,c_N}(z)| = \left| \int \frac{d \mu_{\sigma,c_N}(x)}{z-x} - \frac{d\mu_{\sigma,c_N}(x)}{s_N(z)-x} \right| \leq  
\left|C (s_N(z) -z) \int \frac{d \mu_{\sigma,c_N}(x)}{(z-x)^2} \right|    \leq  \frac{P_6(|\Im(z)|^{-1})}{N}\]  on $ \mathcal{Q}_N.$
%
This completes the proof of \eqref{expii}.

Beginning from \eqref{masteroffdiag} we now finish the proof of \eqref{expik}. 
We have
\begin{equation}
\label{asdf}
\begin{split}
g_N(z)(z - \sigma^2 z g_N(z)- \sigma^2 c + \sigma^2)  \E[  R_{ik}(z)] &= g_N(z) +O\left(\frac{P_4(|\Im(z)|^{-1})}{N}\right) \\
\left(1 + O\left(\frac{P_4(|\Im(z)|^{-1})}{N}\right)\right) \E[  R_{ik}(z)] &=  O\left(\frac{P_5(|\Im(z)|^{-1})}{N}\right) \\
\end{split} 
\end{equation}

Recall that $z \in \mathcal{O}_N$, and $L$ can be chosen such that $O\left(\frac{P_4(|\Im(z)|^{-1})}{N}\right)$ on the l.h.s of  \eqref{asdf} 
is less than $1/2$ in absolute value. Then:
\[ \E[  R_{ik}(z)] =  O\left(\frac{P_5(|\Im(z)|^{-1})}{N}\right)\]

This completes the proof of \eqref{expik}.

Our final step in the proof of Proposition \ref{expandvar} is to prove \eqref{varij}.
First note that if $z \in \mathcal{O}_N^c$ then
\[ \V[R_{ij}(z)] \leq \E[ \|R_N(z)\|^2] \leq \frac{L^4 \E[\|R_N(z)\|^2]}{|\Im(z)|^{4}N} \]
For the remainder of the proof we will assume $z \in \mathcal{O}_N$.
We begin with the resolvent identity \eqref{resident} applied to $\E[R_{ik}(z) R_{ik}(\overline{z})]$ and then apply the decoupling formula 
(\ref{decouple}):
\begin{align}
& z \E[R_{ik}(z) R_{ik}(\overline{z})] = \delta_{ik} \E[R_{ik}(\overline{z})] + \sum_{j=1}^N \* \sum_{l=1}^n \*
\E[ (R_{ij}(z)\* R_{ik}(\overline{z})\*X_{jl}\*X_{kl}] \\
& =  \delta_{ik}\E[R_{ik}(\overline{z})]  +\sigma^2 \E [R_{ik}(z)R_{ik}(\overline{z})  \tr_N(X^* R_N(z)X)] + \frac{\sigma^2}{N} 
\E[(R_N(z) X X^* R_N(z))_{ik} R_{ik}(\overline{z}) ]  \\
&~~~+  \frac{\sigma^2}{N}\E[( R_{ik}(\overline{z}) (R_N(z) X X^* R_N(\overline{z}))_{ik} + (R_N(\overline{z})X X^* R_N(z))_{ii} 
R_{kk}(\overline{z}) )] + \frac{\sigma^2 n}{N} \E[R_{ik}(z) R_{ik}(\overline{z}) ] + r_N,
\end{align}
where $r_N$ contains the third cumulant term, $p=2$, and the error for truncating at $p=2$.

Once again using that $R_N(z)XX^* = zR_N(z) - I_N$ gives:
\begin{equation}
\begin{split}
\label{mastervar}
z \E[R_{ik}(z) R_{ik}(\overline{z})] &=  \delta_{ik}\E[R_{ik}(\overline{z})]  +\sigma^2 z \E [R_{ik}(z)R_{ik}(\overline{z})  
\tr_N(R_N(z))] - \sigma^2 \E [R_{ik}(z)R_{ik}(\overline{z}) ] \\
&~~~ + \frac{\sigma^2}{N} \E[(R_N(z) X X^* R_N(z))_{ik} R_{ik}(\overline{z}) ]  +\frac{\sigma^2}{N}\E[( R_{ik}(\overline{z}) 
(R_N(z) X X^* R_N(\overline{z}))_{ik} \\
&~~~+ (R_N(\overline{z})X X^* R_N(z))_{ii} R_{kk}(\overline{z}) )] +   \frac{\sigma^2 n}{N} \E[R_{ik}(z) R_{ik}(\overline{z}) ] + r_N
\end{split}
\end{equation}

Similar to Lemma \ref{error1} we use the following lemma to complete our variance bound.
\begin{lemma}
\label{error2}
For $z \in \C \setminus \R$:
\begin{equation}
\label{houston2}
Cov[R_{ij}(z)  R_{ik}(\overline{z})   , \tr_N(R_N(z)) ] \leq \frac{ P_{3}(|\Im(z)|^{-1})\E[\|R_N(z)\|^{3/2}]}{N}
\end{equation}
\begin{equation}
\label{atlanta2}
r_N \leq  \frac{P_{3}(|\Im(z)|^{-1})\E \left[ P_{10}(\|X_N\|) \*( \|R_N(z)\|^2 + \| R_N(z) \|^{3/2}) \right]}{N}
\end{equation}
\end{lemma}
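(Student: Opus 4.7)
The plan is to mimic the structure of Lemma \ref{error1} but handle the two additional complications: the random variable being decoupled is now a product $R_{ij}(z) R_{ik}(\overline{z})$ of two resolvent entries, and the decoupling is applied after multiplication by $X_{jl}$ in \eqref{mastervar}, producing more terms upon differentiation.

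For the covariance bound \eqref{houston2}, I would first estimate $\V(R_{ij}(z) R_{ik}(\overline{z}))$ by writing
\begin{equation*}
R_{ij}(z) R_{ik}(\overline{z}) - \E[R_{ij}(z) R_{ik}(\overline{z})] = (R_{ij}(z)-\E R_{ij}(z)) R_{ik}(\overline{z}) + \E R_{ij}(z)(R_{ik}(\overline{z})-\E R_{ik}(\overline{z})),
\end{equation*}
and bounding each factor using $|R_{ab}| \leq |\Im z|^{-1}$ together with the single-entry variance bound $\V(R_{ij}(z)) \leq |\Im z|^{-1/2}\E[\|R_N(z)\|^{3/2}]$ from the proof of Lemma \ref{error1}. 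This yields $\V(R_{ij}(z) R_{ik}(\overline{z})) \leq P_4(|\Im z|^{-1}) \E[\|R_N(z)\|^{3/2}]$. Combining with the Shcherbina bound $\V(\tr_N R_N(z)) \leq N^{-2} |\Im z|^{-7/2}\E[\|R_N(z)\|^{3/2}]$ from \eqref{tracevarbound} via Cauchy--Schwarz produces the required $P_3(|\Im z|^{-1}) \E[\|R_N(z)\|^{3/2}]/N$ bound.

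For the error bound \eqref{atlanta2}, I would decompose $r_N$ into the third cumulant term and the truncation remainder, in analogy with the proof of \eqref{atlantaz}. The third cumulant term is obtained by differentiating $R_{ij}(z) R_{ik}(\overline{z}) X_{jl}$ twice in $X_{kl}$, via \eqref{eq:derivative} applied to each of $R_{ij}(z)$ and $R_{ik}(\overline{z})$. Summing the products over $l$ (and $j$) produces finitely many terms of the schematic form
\begin{equation*}
\tfrac{1}{N^{3/2}} \sum_l \kappa_3((A_N)_{kl}) \E\bigl[R_{ab}(z_1) R_{cd}(z_2) (R_N(z_3)X)_{ef}^{\alpha} (X^* R_N(z_4))_{gh}^{\beta} (X^* R_N(z_5) X)_{qr}^{\delta}\bigr]
\end{equation*}
with $z_i \in \{z,\overline{z}\}$ and $\alpha + \beta + 2\delta \leq 4$; by \eqref{ineq}, each entry of $RX$, $X^*R$, $X^*RX$ is dominated by $\|X_N\| \cdot \|R_N\|$ or $\|X_N\|^2 \cdot \|R_N\|$, so summation over $l$ and use of Cauchy--Schwarz gives a bound of the form $|\Im z|^{-1}\E[P_6(\|X_N\|)\|R_N(z)\|^2]/N$.

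For the truncation remainder I would reuse the rank-two perturbation argument from Lemma \ref{error1}: the decoupling error is controlled by a supremum over $X' = X + x E_{kl}$ with $|x| \leq \epsilon_N N^{-1/4}$ of expectations of the same schematic form but with up to one additional factor and with $R_N$, $X_N$ replaced by $R_N'$, $X_N'$; the bounds $\|X_N'\| = \|X_N\|+o(1)$ and $\|R_N'\| \leq \|R_N\|(1+\epsilon_N\|X_N\|)$ from \eqref{dinamo1}--\eqref{dinamo2} convert this into the claimed $P_3(|\Im z|^{-1})\E[P_{10}(\|X_N\|)(\|R_N(z)\|^2 + \|R_N(z)\|^{3/2})]/N$ bound. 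The main obstacle is purely bookkeeping: because the chain rule is applied to a product of two resolvents multiplied by $X_{jl}$, the number of terms produced by two derivatives roughly doubles compared to Lemma \ref{error1}, and one must verify that every such term fits within the combined envelope $P_{10}(\|X_N\|)(\|R_N\|^2 + \|R_N\|^{3/2})$, the $\|R_N\|^{3/2}$ component accounting for terms that reduce to covariance-type estimates rather than pure sup bounds.
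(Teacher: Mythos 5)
Your proof follows the same route as the paper's: the paper simply says \eqref{houston2} and \eqref{atlanta2} "follow from the steps taken in the proof of Lemma \ref{error1}," then writes out the three subsums of the third cumulant term explicitly and refers back to the rank-two perturbation argument in \eqref{dinamo}--\eqref{dinamo2} for the truncation remainder, which is exactly what you describe.

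One small bookkeeping slip in \eqref{houston2}: you state $\V(R_{ij}(z)R_{ik}(\overline{z})) \leq P_4(|\Im z|^{-1})\E[\|R_N(z)\|^{3/2}]$, but combining a degree-$4$ variance with $\V(\tr_N R_N(z)) \leq N^{-2}|\Im z|^{-7/2}\E[\|R_N(z)\|^{3/2}]$ by Cauchy--Schwarz gives a covariance of degree $(4 + 7/2)/2 = 15/4$, not $3$. The decomposition you wrote actually supports the sharper bound $\V(R_{ij}(z)R_{ik}(\overline{z})) \leq C\,|\Im z|^{-5/2}\E[\|R_N(z)\|^{3/2}]$: each summand is controlled by $|\Im z|^{-2}\V(R_{\cdot\cdot}) \leq |\Im z|^{-2}\cdot|\Im z|^{-1/2}\E[\|R_N\|^{3/2}]$, so the degree is $5/2$, and then Cauchy--Schwarz gives degree $(5/2 + 7/2)/2 = 3$, which is the claimed $P_3$. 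So the decomposition is correct but you overcounted the intermediate degree; with the tighter bookkeeping the argument closes, and otherwise matches the paper's intent.
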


\begin{proof} The proof follows from the steps taken in the proof of Lemma \ref{error1}. For the reader's convenience the third cumulant term is: 
\begin{equation}
\begin{split}
&\frac{1}{2N^{3/2} } \sum_{j=1}^N \sum_{l=1}^n \kappa_3((A_N)_{kl}) \E\left[\frac{\d^2 R_{ij}(z) X_{jl} R_{ik}(\overline{z})}{\d X_{kl}^2} \right]\\
=&\frac{1}{2N^{3/2} }  \sum_{l=1}^n  \kappa_3((A_N)_{kl}) \E\left[ \frac{\d^2 (R_N(z)X)_{il} }{ \d^2 X_{kl} }R_{ik}(\overline{z}) + 2\frac{\d (R_N(z) X)_{il}}{\d X_{kl} } \frac{\d R_{ik}(\overline{z}) }{\d X_{kl}}+(R_N(z)X)_{il} \frac{\d^2 R_{ik}(\overline{z})}{\d^2X_{kl}} \right] \\
 \end{split}
\end{equation}
The first subsum is:
\begin{equation}
\begin{split}
& \frac{1}{2N^{3/2} }  \sum_{l=1}^n  \kappa_3((A_N)_{kl})  \left( R_{ik}(z) (X^* R_N(z)X)_{ll} (R_N(z) X_{kl}) R_{ik}(\overline{z}) \right.\\
&~~~+ 2(R(z)X)_{il} (R_N(z)X)_{kl} (R_N(z)X)_{kl} R_{ik}(\overline{z}) + 2R_{ik}(z) (R_N(z) X)_{kl}  R_{ik}(\overline{z})\\
&~~~ +2 R_{ik}(z) (X^* R_N(z))_{lk} (X^* R_N(z)X)_{ll} R_{ik}(\overline{z}) \\
&~~~+\left. 2(R_N(z)X)_{il} R_{kk}(z)(X^*R_N(z)X)_{ll} R_{ik}(\overline{z}) \right)
\end{split}
\end{equation}
The second subsum is:
\begin{equation}
\begin{split}
&\frac{1}{2N^{3/2} }  \sum_{l=1}^n  \kappa_3((A_N)_{kl}) \left(2 R_{ik}(z) (X^* R_N(z) X)_{ll} R_{ik}(\overline{z}) (X^* R_N(\overline{z}))_{lk}  \right.\\
&~~~  +2 (R_N(z)X)_{il} (R_N(z)X)_{kl} R_{ik}(\overline{z}) (X^* R_N(\overline{z}))_{lk} \\
&~~~+2R_{ik}(z) (X^* R_N(z) X)_{ll} (R_N(\overline{z})X)_{il} R_{kk}(\overline{z})  +     2    (R_N(z) X)_{il}(z) (R_N(z)X)_{kl}(R_N(\overline{z})X)_{il} R_{kk}(\overline{z})\\
&~~~+\left. 2  R_{ik}(z) R_{ik}(\overline{z}) (X^* R_N(\overline{z}))_{lk} + 2R_{ik}(z)(R_N(\overline{z})X)_{il} R_{kk}(\overline{z}) \right)
\end{split}
\end{equation}
The third subsum is:
\begin{equation}
\begin{split}
&\frac{1}{2N^{3/2} }  \sum_{l=1}^n   \kappa_3((A_N)_{kl})  \left(2  (R_N(z) X)_{il} R_{ik}(\overline{z})  (X^* R_N(\overline{z}) X)_{ll} R_{kk}(\overline{z})  \right.\\
&~~~  + 2 (R_N(z) X)_{il}(R_N(\overline{z}) X)_{il} (R_N(\overline{z}) X)_{kl} R_{kk}(\overline{z})  \\
&~~~ + 2  (R_N(z) X)_{il} R_{ik}(\overline{z})  R_{kk}(\overline{z})   +2  (R_N(z) X)_{il} R_{ik}(\overline{z})  (X^* R_N(\overline{z}) )_{lk} (X^* R_N(\overline{z}) )_{lk} \\
&~~~+ \left. 2 (R_N(z) X)_{il}(R_N(\overline{z}) X)_{il} R_{kk}(\overline{z}) (X^*R_N(\overline{z}) )_{lk} \right)
 \end{split}
\end{equation}
Once again by \eqref{ineq} and the Cauchy-Schwarz inequality this term is bounded by 
$$O\left(\frac{P_{2}(|\Im(z)|^{-1}) \E[ P_4(\|X_N\|) \* \|R_N(z)\|^2]}{N}\right).$$
The error term due to the truncation of the decoupling formula at $p=2$ is estimated as in (\ref{dinamo}-\ref{dinamo2}) in Lemma \ref{error1}.
\end{proof}
Then using  \eqref{master1} to subtract $\E[R_{ik}(z)] \E[R_{ik}(\overline{z})]$ from \eqref{mastervar} gives:
\begin{align}
&(z -\sigma^2z g_N(z) + \sigma^2(1 - c_N)  )( \E[R_{ik}(z)R_{ik}(\overline{z})] - \E[R_{ik}(z)] \E[R_{ik}(\overline{z})])\\
 &=  O\left( \frac{P_{3}(|\Im(z)|^{-1}) (\E[P_{10}(\|(X_N\|) \* (\|R_N(z)\|^2 + \|R_N(z)\|^{3/2})}{N} \right) 
\end{align}
Repeating the argument for \eqref{expik} leads to:
$$\V(R_{ik}(z)) = O\left( \frac{P_{4}(|\Im(z)|^{-1}) (\E[P_{10}(\|(X_N\|) \* (\|R_N(z)\|^2 + \|R_N(z)\|^{3/2})}{N} \right).$$
\end{proof}

\section{ Functional Calculus }  \label{sect:functional}
\label{sec:proofs}
We now extend the results of Section \ref{sec:estvar} from resolvents to a more general class of functions. To do this we use the 
Helffer-Sj\"ostrand functional calculus (\cite{HS}, \cite{D}). Let $f \in C^{l+1}(\R)$, functions with $l+1$ continuous derivatives that decay at infinity sufficiently fast. Then one can write
\begin{equation}
 f(X_N)=-\frac{1}{\pi}\,\int_{\mathbb{C}}\frac{\partial \tilde{f}}{\partial \bar{z}}\, R_N(z)\,dxdy 
\quad,\quad\frac{\partial \tilde{f}}{\partial \bar{z}} := \frac{1}{2}\Big(\frac{\partial \tilde{f}}
{\partial x}+i\frac{\partial \tilde{f}}{\partial y}\Big)
\label{formula-H/S}
 \end{equation}
 where:
 \begin{itemize}
\item[i)]
 $z=x+iy$ with $x,y \in \mathbb{R}$;
 \item[ii)] $\tilde{f}(z)$ is the extension of the function $f$ defined as follows
  \begin{equation}\label{a.a. -extension}
  \tilde{f}(z):=\Big(\,\sum_{n=0}^{l}\frac{f^{(n)}(x)(iy)^n}{n!}\,\Big)\sigma(y);
\end{equation}
 here $\sigma \in C^{\infty}(\mathbb{R})$ is a nonnegative function equal to $1$ for $|y|\leq 1/2$ and equal to zero for $|y|\geq 1$.
 \end{itemize}
From its definition one can see that \eqref{a.a. -extension} satisfies the following bound: \begin{equation}\label{estimate-derivative3}
\Big|\frac{\partial \tilde{f}}{\partial \bar{z}} (x+iy)\Big|\leq  Const \* \max\left(|\frac{d^jf}{dx^j}(x)|, \ 0\leq j \leq l+1\right) \*
|y|^l\quad.
\end{equation}

\begin{proposition}
\label{expandvarf} Let $A_N$ be an $N \times n$ real (complex) matrix that satisfies condition \textbf{C2} (\textbf{C1}). 
Let $M_N = \frac{1}{N}A_N A_N^*$.
\begin{enumerate}[(i)]
\item Let $f\colon \R \to \R $ such that $supp(f) \cap \R_+ \subset [0,L]$ for some $L>0$ and \\
$\|f\|_{C^7([0,L])} < \infty$, 
then there exists a constant, 
$C(L,\sigma, m_4)$, such that:
\begin{equation}
\label{expiif}
\left| \E[f(M_N)_{ii}] - \int f(x) d \mu_{\sigma,c_N}(x)\right| \leq C(L,\sigma, m_4) \|f\|_{C^7([0,L])} N^{-1} \text{ for } 1 \leq i \leq N.
\end{equation}
\item Let $f\colon \R \to \R $ such that $\|f\|_{C^6(\R_+)} < \infty$, then there exists a constant, $C(\sigma, m_4)$, such that:
\begin{equation}
\label{expikf}
\left| \E[f(M_N)_{ik}] \right| \leq C(\sigma, m_4) \|f\|_{C^6(\R_+)} N^{-1} \text{ for } 1 \leq i \not= k \leq N.
\end{equation}
\item Let $f\colon \R \to \R $ such $\|f\|_{s} < \infty$, for $s>3$ then there exists a constant, $C(s, \sigma, m_4)$ such that:
\begin{equation}
\label{varijf}
\left| \V[f(M_N)_{ij}] \right| \leq C(s, \sigma, m_4) \|f\|_{s} N^{-1}   \text{ for } 1 \leq i , j \leq N,
\end{equation}
\end{enumerate}
\end{proposition}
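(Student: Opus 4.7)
The strategy is to apply the Helffer-Sj\"ostrand functional calculus \eqref{formula-H/S} to represent each entry of $f(M_N)$ as an integral of resolvent entries against the almost-analytic extension $\partial_{\bar z}\tilde f$, then invoke the resolvent estimates of Proposition \ref{expandvar}. Because $M_N\geq 0$ has spectrum contained in $[0, u_+ + o(1)]$ with high probability by Lemmas \ref{lemma:spectral-norm} and \ref{lemma:expecation-norm}, multiplying $f$ by a smooth cutoff equal to $1$ on $[-1, u_+ + 1]$ changes $\E[f(M_N)_{ij}]$ and $\V(f(M_N)_{ij})$ only by $O(N^{-k})$ for any $k$, while preserving the relevant norms up to a universal constant. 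We may therefore assume $f$ has compact real support, so that $\partial_{\bar z}\tilde f$ is supported in a fixed compact subset of $\C$.

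For part (i), take $l = 6$ in \eqref{a.a. -extension}. Then
\begin{equation*}
\E[f(M_N)_{ii}] - \int f\,d\mu_{\sigma, c_N} = -\frac{1}{\pi}\int_{\C}\partial_{\bar z}\tilde f(z)\bigl(\E[R_{ii}(z)] - g_{\sigma, c_N}(z)\bigr)\,dx\,dy.
\end{equation*}
Bound \eqref{estimate-derivative3} gives $|\partial_{\bar z}\tilde f|\leq C\|f\|_{C^7([0,L])}|y|^6\sigma(y)$, while \eqref{expii} gives $|\E[R_{ii}(z)] - g_{\sigma, c_N}(z)|\leq CP_6(|y|^{-1})/N$; since $|y|^6 P_6(|y|^{-1})$ is bounded and the integrand is supported on a compact set, the integral is $O(\|f\|_{C^7}/N)$, proving \eqref{expiif}. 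Part (ii) is identical with $l = 5$, \eqref{expik} in place of \eqref{expii}, and $P_5$ in place of $P_6$.

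For part (iii), applying H-S to $f(M_N)_{ij} - \E[f(M_N)_{ij}]$ and taking $L^2(\Omega)$ norms via Minkowski's integral inequality gives
\begin{equation*}
\sqrt{\V(f(M_N)_{ij})}\leq \frac{1}{\pi}\int_{\C}|\partial_{\bar z}\tilde f(z)|\sqrt{\V(R_{ij}(z))}\,dx\,dy.
\end{equation*}
Combining \eqref{varij} with Lemma \ref{lemma:expecation-norm} and $\|R_N(z)\|\leq|\Im z|^{-1}$ yields $\sqrt{\V(R_{ij}(z))}\leq CP_3(|y|^{-1})N^{-1/2}$. To bring in the Sobolev norm $\|f\|_s$ rather than the stronger $\|f\|_{C^{l+1}}$ norm, one applies Cauchy-Schwarz in $x$ together with Plancherel: the bulk term of $\partial_{\bar z}\tilde f$ is $(iy)^l f^{(l+1)}(x)\sigma(y)/l!$, so $\bigl(\int_\R|f^{(l+1)}(x)|^2\,dx\bigr)^{1/2}=\bigl(\int|k|^{2(l+1)}|\hat f(k)|^2\,dk\bigr)^{1/2}\leq\|f\|_s$ whenever $l+1\leq s$. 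Pairing this with an $L^2$-in-$x$ estimate on $\V(R_{ij}(x+iy))$ coming from the Hilbert-Schmidt identity $\int\Tr(R_N(z)R_N(z)^\ast)\,dx = N\pi/|y|$ together with the $P_4(|y|^{-1})/N$ prefactor in \eqref{varij}, one obtains a $y$-integral that converges under the hypothesis $s > 3$ and produces the bound \eqref{varijf}.

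The principal obstacle is the last step for (iii). A naive pointwise combination of $|\partial_{\bar z}\tilde f|\leq C|y|^l\|f\|_{C^{l+1}}$ with the $P_3(|y|^{-1})$ blow-up of $\sqrt{\V(R_{ij}(z))}$ would force $l\geq 3$ and hence $f\in C^4$, strictly stronger than $\mathcal{H}_s$ for $s\in(3,4]$; the gap must be bridged by replacing the pointwise $\ell^\infty$ bound in $x$ by an averaged $L^2$ estimate so that Plancherel can convert $\|f^{(l+1)}\|_{L^2}$ into $\|f\|_s$ at the expense of a smaller $l$. Handling the $\sigma'(y)$-boundary term in $\partial_{\bar z}\tilde f$ separately (it is supported on $|y|\in[1/2,1]$, where everything is bounded) and carefully controlling the constants dependent on the compact support of the cutoff then completes the proof. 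This Fourier-analytic device is the sample-covariance analog of the corresponding step in \cite{ors} for Wigner matrices.
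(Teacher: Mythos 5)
Parts (i) and (ii) of your proposal match the paper's argument: Helffer-Sj\"ostrand with $l=6$ (resp.\ $l=5$), \eqref{estimate-derivative3}, and the resolvent estimates \eqref{expii}, \eqref{expik}. That part is fine.

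Part (iii) is where the gap lies, and it is twofold. First, your use of the Hilbert-Schmidt identity $\int_\R \Tr(R_N(z)R_N(z)^\ast)\,dx = N\pi/|y|$ to control the $L^2$-in-$x$ integral of $\V(R_{ij}(x+iy))$ is fatal: $\|R_N\|^2 \le \Tr(R_N R_N^\ast)$, so this route gives $\int_\R \|R_N(x+iy)\|^2\,dx \le N\pi/|y|$, carrying an extra factor of $N$ that exactly cancels the $1/N$ prefactor in \eqref{varij} and destroys the desired bound. The paper avoids this by splitting the $x$-integral at $\pm\|X_N X_N^\ast\|$: inside one has $\|R_N\|^2 \le y^{-2}$ over an interval of length $O(\|X_N X_N^\ast\|)$, outside one has $\|R_N(x+iy)\|^2 \le \left((x-\|X_NX_N^\ast\|)^2 + y^2\right)^{-1}$ by spectrum containment, giving $\int_\R \|R_N\|^2\,dx \le P_2(y^{-1})\,P_2(\|X_N\|)$ with \emph{no} factor of $N$, which combines with Lemma \ref{lemma:expecation-norm} to yield $\int_\R \V[R_{ij}(x+iy)]\,dx = O(P_6(y^{-1})/N)$.

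Second, even after replacing the trace bound by the correct operator-norm bound, the Minkowski-plus-Cauchy-Schwarz-plus-Plancherel scheme built on the almost-analytic extension $\tilde f$ with integer $l$ does not reach the stated threshold $s>3$. With $l=3$ the bulk term of $|\partial_{\bar z}\tilde f|$ is $\sim |y|^3 |f^{(4)}(x)|$, and after Cauchy-Schwarz in $x$ the $y$-integral $\int_0^{1/2} |y|^3 P_3(y^{-1})\,dy$ converges but costs $\|f^{(4)}\|_{L^2} \le \|f\|_s$ only for $s\ge 4$; lowering $l$ to $2$ makes the $y$-integral diverge logarithmically. The paper instead applies Proposition 2.2 of \cite{ors} (a reformulation of Proposition 1 of \cite{Sh}) directly to the random spectral measure $\mu(dx,\omega)=\sum_l \delta(x-\lambda_l)|\phi_l(1)|^2$, yielding
\begin{equation*}
\V[f(M_N)_{11}] \le C_s \|f\|_s^2 \int_0^\infty dy\, e^{-y}\, y^{2s-1} \int_\R \V[R_{11}(x+iy)]\,dx,
\end{equation*}
whose weight $y^{2s-1}$ against $P_6(y^{-1})$ is integrable precisely when $s>3$. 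This Sobolev variance inequality is a genuinely sharper tool than the integer-order Helffer-Sj\"ostrand extension and is the ingredient your proposal is missing; without it you obtain only $s\ge 4$. (In the off-diagonal case the paper decomposes the complex measure $\sum_l \delta(x-\lambda_l)\overline{\phi_l(i)}\phi_l(j)$ into a linear combination of probability measures and applies the same proposition termwise.)
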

The proof follows as in \cite{ors}. We sketch the details below.
\begin{proof}
First, we note that since $M_N$ is a non-negative definite matrix, changing the values of $f(x)$ for negative $x$ does not have any effect on
the matrix values $f(M_N)_{ij}.$   For example, we can always multiply $f$ by a smooth function $\varphi$ which is identically $1$ on $R_+$ and $0$ on
$(-\infty, -\delta].$ By the Helffer-Sj\"ostrand functional calculus we have:
\begin{equation}
\begin{split}
 \E[f(M_N)_{ii}] &=  \E\left[-\frac{1}{\pi}\,\int_{\mathbb{C}}\frac{\partial \tilde{f}}{\partial \bar{z}}\, R_{ii}(z)\,dxdy\right] \\
&=  \E\left[-\frac{1}{\pi}\,\int_{\mathbb{C}}\frac{\partial \tilde{f}}{\partial \bar{z}}\, (g_\sigma(z) + \epsilon_{ii}) \,dxdy\right] \\
&=  \int f(x)  d\mu_{\sigma,c_N}(x) -  \E\left[\frac{1}{\pi}\,\int_{\mathbb{C}}\frac{\partial \tilde{f}}{\partial \bar{z}} \epsilon_{ii} \,dxdy\right] 
\end{split}
\end{equation}
Where 
\[ |\epsilon_{ii}| = |\E[ R_{ii}(z) - g_{\sigma,c_N}(z) | \leq \left|\frac{P_{6}(|\Im(z)|^{-1})}{N} \right| \]
by \eqref{expii}. 
Combining this inequality with \eqref{estimate-derivative3}, letting $l=6$ yields:
\begin{equation}
\begin{split}
 \E\left[\frac{1}{\pi}\,\int_{\mathbb{C}} \frac{\partial \tilde{f}}{\partial \bar{z}} \epsilon_{ii} \,dxdy\right] \leq C  \|f\|_{C^7([0,L])}N^{-1} 
\end{split}
\end{equation}
Completing the proof of \eqref{expiif}. The proof of \eqref{expikf} follows similarly.
%


%

The rest of the proof of \eqref{varijf} follows the proof of Proposition 4.2 in \cite{ors}, using Proposition 1 from \cite{Sh}.

We first consider the diagonal case, $i=j$, without loss of generality let $i=1$ and define the random spectral measure
\begin{equation}
\label{specmeas}
 \mu(dx,\omega) := \sum_{l=1}^N \delta(x - \lambda_l) |\phi_{l}(1)|^2 
 \end{equation}
Where $\lambda_l$ are the eigenvalues of $M_N$ and $\phi_l$ are the corresponding normalized eigenvectors. 

Proposition $2.2$ of \cite{ors} applied to the measure \eqref{specmeas} gives

\[
\V[f(M_N)_{11}] \leq C_s \|f\|_s^2 \int_{0}^{\infty} dy e^{-y} y^{2s-1} \int_{-\infty}^{\infty} \V[R_{11}(x+iy)] dx .\]

Using \eqref{varij} we can estimate $\int_{-\infty}^{\infty} \V[R_{ij}(x+iy)] dx$ from above by
\begin{align}
 \frac{P_4(y^{-1})}{N} \E\left[\int_{-\infty}^{\infty} P_{10}(\|X_N\|) \*(\|R_N(x+iy)\|^2 +\|R_N(x+iy)\|^{3/2}) dx\right] 
 \end{align}
Once we open the brackets, we obtain two terms. Here, we bound the first term. The other term can be estimated in a similar way. 
\begin{align}
\label{tor1}
& \E\left[\int_{-\infty}^{\infty} P_{10}(\|X_N\|) \|R_N(x+iy)\|^2 \*  dx  \right] \leq\\
\label{tor2}
 & \E \left[  P_{10}(\|X_N\|) \* 
\left(\int_{-\|X_N X_N^*\|}^{\|X_N X_N^*\|} y^{-2} dx + \int_{|x|>\|X_N X_N^*\|} \frac{ 1}{(x-\|X_NX_N^*\|)^2 + y^2} dx \right) \right]  \\
\label{tor3}
& \leq P_2(y^{-1}) \* \E P_{12}(\|X_N\|).
 \end{align}
By Lemma \ref{lemma:expecation-norm} (\ref{tor3}) can be bounded by $C \* P_2(y^{-1})$.
This leads to 
\[ \V[f(X_N)_{11}] \leq C_s \frac{\|f\|_s^2}{N} \int_{0}^\infty dy e^{-y} y^{2s-1} P_6(y^{-1}).\]
The integral converges if $s>3$.\\
In the off-diagonal case $i \not=j$, we consider the (complex-valued) measure
\[ \mu(dx,\omega):= \sum_{l=1}^N \delta(x - \lambda_l) \overline{\phi_l(i)} \phi_l(j) ,\]
which is a linear combination of probability measures, and apply Proposition 2.2 of \cite{ors} to each probability measure in the linear combination. 
Proposition \ref{expandvarf} is proven. 
\end{proof}

\section{Resolvent CLT} \label{sect:resolvent}

Let $m$ be a fixed positive integer and let $R_N^{(m)}(z)$ denote the $m \times m$ upper-left corner of the resolvent matrix, $R_N(z)$.  Define 
\begin{equation*}
	\Psi_N(z) = \sqrt{N} \left(R_N^{(m)}(z) - g_{\sigma, c_N}(z)I \right), z \in \C \setminus [0, \sigma^2(1+\sqrt{c})^2].
\end{equation*}
Clearly, $\Psi_N(z)$ is well defined for $z \in \C \setminus \R.$   By Lemma \ref{lemma:spectral-norm}, $\Psi_N(z)$ is well defined for 
$z \in \R \setminus [0, \sigma^2(1+\sqrt{c})^2]$ with probability going to $1.$
We are interested in studying the random function $\Psi_N(z)$ whose values are in the space of complex symmetric $m \times m$ matrices.  We also define
\begin{equation*}
	\varphi(z,w) = \E\left[ \frac{z}{z-\eta_{1/c}} \frac{w}{w-\eta_{1/c}} \right]
\end{equation*}
where $\eta_{1/c}$ is a Marchenko-Pastur distributed random variable with ratio index $\frac{1}{c}$ and scale index $\sigma^2$.  We introduce the following notation
\begin{align*}
	\varphi_{++}(z,w) &= \E\left[ \Re \frac{z}{z-\eta_{1/c}} \Re \frac{w}{w-\eta_{1/c}} \right] \\
	\varphi_{--}(z,w) &= \E\left[ \Im \frac{z}{z-\eta_{1/c}} \Im \frac{w}{w-\eta_{1/c}} \right] \\ 
	\varphi_{+-}(z,w) &= \E\left[ \Re \frac{z}{z-\eta_{1/c}} \Im \frac{w}{w-\eta_{1/c}} \right].
\end{align*}

\begin{theorem} \label{thm:resolvent-clt}
Let $A_N$ be a $N \times n$ random matrix with real entries that satisifies condition {\bf C2}.  Let $m$ be a fixed positive integer and assume that for 
$1 \leq i \leq m$
\begin{equation} \label{eq:k_4}
	m_4(i) := \lim_{N \rightarrow \infty} \frac{1}{n} \sum_{j} \E|A_{ij}|^4 
\end{equation}
exists and for all $\epsilon>0$ \eqref{eq:lf0.25} holds.  Let
\begin{equation*}
	\kappa_4(i) := m_4(i) - 3 \sigma^4, \ 1 \leq i \leq m.
\end{equation*}
Then the random field $\Psi_N(z)$ converges in finite-dimensional distributions to the random field $\Psi(z) = \sqrt{c} g_{\sigma,c}^2(z) Y(z)$ where
\begin{equation*}
	Y(z) = \left( Y_{ij}(z) \right)_{1 \leq i,j \leq m}
\end{equation*}
is the Gaussian random field such that the entries $Y_{ij}(z)$, $i\leq j$ and $Y_{kl}(w)$, $k \leq l$ are independent when $(i,j) \neq (k,l)$ and 
\begin{align*}
	\cov( \Re Y_{ii}(cz), \Re Y_{ii}(cw) ) &= \kappa_4(i) \Re[z g_{\sigma, 1/c}\left( z \right)] \Re [w g_{\sigma, 1/c}(w)] + 2 \sigma^4 \varphi_{++}\left(z,w\right), \\
	\cov( \Im Y_{ii}(cz), \Im Y_{ii}(cw) ) &= \kappa_4(i) \Im[z g_{\sigma, 1/c}\left( z \right)] \Im [w g_{\sigma, 1/c}\left(w \right)] + 2 \sigma^4 \varphi_{--}\left(z,w\right), \\
	\cov( \Re Y_{ii}(cz), \Im Y_{ii}(cw) ) &= \kappa_4(i) \Re [z g_{\sigma, 1/c}\left( z \right)] \Im [w g_{\sigma, 1/c}\left(w\right)] + 2 \sigma^4 \varphi_{+-}\left(z,w\right), \\
	\cov( \Re Y_{ij}(cz), \Re Y_{ij}(cw) ) &= \sigma^4 \varphi_{++}\left(z,w\right) , i \neq j, \\
	\cov( \Im Y_{ij}(cz), \Im Y_{ij}(cw) ) &= \sigma^4 \varphi_{--}\left(z,w\right) , i \neq j, \\
	\cov( \Re Y_{ij}(cz), \Im Y_{ij}(cw) ) &= \sigma^4 \varphi_{+-}\left(z,w\right) , i \neq j. 
\end{align*}
\end{theorem}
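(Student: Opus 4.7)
The plan is to establish convergence of finite-dimensional distributions via the characteristic-function/ODE method developed in \cite{PRS, LytP, ors}, adapted to the sample covariance setting. By the Cramér--Wold device, it suffices to fix finitely many points $z_1, \ldots, z_p \in \C \setminus [0, \sigma^2(1+\sqrt{c})^2]$ (the extension from $\C \setminus \R$ to the real axis outside the spectrum is handled using Lemma \ref{lemma:spectral-norm} and the truncation of Section \ref{sect:truncation}), index pairs $(i_k, j_k) \in \{1,\ldots,m\}^2$ with $i_k \leq j_k$, and real coefficients $\alpha_k, \beta_k$, and to show that
\begin{equation*}
L_N := \sum_k \alpha_k \Re \Psi^N_{i_k j_k}(z_k) + \beta_k \Im \Psi^N_{i_k j_k}(z_k)
\end{equation*}
converges to a centered real Gaussian whose variance matches the kernel prescribed in the theorem.

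To identify this limit I would set $\phi_N(s) := \E e^{is L_N}$ and compute $\phi_N'(s) = i \E[L_N e^{isL_N}]$. The core calculation is to evaluate $\E[\sqrt{N}(R_{ij}(z) - g_{\sigma,c_N}(z)\delta_{ij}) \, e^{isL_N}]$ by rewriting the left factor via $z R(z) = I + R(z) X X^*$, so that each resolvent entry is exposed as a bilinear form $\sum_{l,p} R_{il}(z) X_{lp} X_{jp}$. Applying the decoupling formula \eqref{decouple} to each $X_{kp}$ against the product $R_{kj}(z) e^{isL_N}$, with $\partial e^{isL_N}/\partial X_{kp}$ expanded via \eqref{eq:derivative}, produces four classes of terms: (a) a self-consistent contribution that, combined with Proposition \ref{expandvar}, absorbs the deterministic piece $g_{\sigma,c_N}(z)\delta_{ij}$ and the resolvent denominator $z - \sigma^2 z g_N(z) - \sigma^2 c_N + \sigma^2$ that was inverted in \eqref{masterdiag}--\eqref{approxeq}; (b) a Gaussian cross-variance piece from $p=1$ of order $1$ that, after further use of $R(z)XX^*R(w) = z R(z) R(w) - R(w)$, produces the $\sigma^4\varphi_{\pm\pm}$ and $2\sigma^4\varphi_{\pm\pm}$ kernels, with the factor $z/(z-\eta_{1/c})$ arising naturally from traces of the form $\frac{1}{N}\tr(X^* R(z) X)$ whose limit is most cleanly expressed through the spectrum of $X_N^* X_N$ (dimension ratio $1/c$); (c) a third-cumulant contribution at $p=2$ that vanishes in the limit thanks to the $N^{1/4}$-truncation from Lemma \ref{lemma:finite-truncation} and the Lindeberg condition \eqref{eq:lf0.25-e} (note the row restriction $i \leq m$ is crucial here); and (d) a surviving fourth-cumulant contribution at $p=3$ which is nonzero only when all four copies of $X_{kp}$ contract into a single diagonal resolvent factor $R_{ii}$ in $L_N$, producing the $\kappa_4(i) \Re[z g_{\sigma,1/c}(z)] \Re[w g_{\sigma,1/c}(w)]$ terms on the diagonal. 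The truncation error of decoupling at $p=3$ is $O(\sup|(A_N)_{kl}|^5/N^{5/2}) = O(\epsilon_N)$ per term, so negligible after summation. Assembling these yields the approximate ODE
\begin{equation*}
\phi_N'(s) = -s V \phi_N(s) + o(1)
\end{equation*}
uniformly on compact $s$-sets, with $V$ the variance predicted by the covariance kernel, and integration against $\phi_N(0)=1$ gives $\phi_N(s) \to e^{-s^2 V/2}$.

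The main obstacle will be the bookkeeping of the fourth-cumulant contributions and of the bilinear objects $\E[R_{ij}(z) R_{kl}(w)]$, which must be computed to leading order to pin down the covariance kernel in the form stated. The independence of distinct entries $Y_{ij}(z), Y_{kl}(w)$ for $(i,j) \neq (k,l)$ with $i \leq j$, $k \leq l$ follows from the fact that the $p=1$ Wick contraction $\E[X_{ap} X_{bq}] = \sigma^2 \delta_{ab}\delta_{pq}$ forces index matching at every decoupling step, so that any surviving cross-covariance must pass through an off-diagonal expectation $\E R_{ab}$ or variance $\V R_{ab}$ which is $O(1/N)$ by \eqref{expik} and \eqref{varij}. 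The restriction of the $\kappa_4(i)$ piece to diagonal entries follows for the same reason: an off-diagonal $R_{ij}$ with $i \neq j$ appearing in the $p=3$ term yields an extra factor $\E R_{ij} = O(N^{-1/2})$ relative to the surviving diagonal contribution. Finally, the explicit appearance of $g_{\sigma,1/c}$ alongside $g_{\sigma,c}$ and the overall prefactor $\sqrt{c} g_{\sigma,c}^2(z)$ in $\Psi(z)$ are to be matched by using \eqref{utro11} and its analogue for $g_{\sigma,1/c}$ to convert between traces of $R_N(z)$ and of $(zI_n - X_N^* X_N)^{-1}$, the $\sqrt{c}$ factor emerging from the aspect ratio $c_N = n/N$ that appears when inverting the linearized self-consistent operator in \eqref{masterdiag1}.
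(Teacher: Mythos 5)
Your proposal is correct in outline but follows a genuinely different route from the paper. The paper does not go through characteristic functions and the decoupling formula at this stage. Instead it uses a Schur-complement decomposition: writing $R_N^{(m)}(z) = \bigl(zI_m - \bigl(\tfrac{1}{N} r_i B_N^{(m)}(z) r_j^\ast\bigr)_{i,j=1}^m\bigr)^{-1}$ with $B_N^{(m)}(z) = I_n + {X_N^{(m)}}^\ast (z - X_N^{(m)}{X_N^{(m)}}^\ast)^{-1} X_N^{(m)}$, then linearizing around $g_{\sigma,c_N}(z)^{-1}$ to obtain $\Psi_N(z) = g_{\sigma,c_N}^2(z)\Gamma_N(z) + o(1)$ where $\Gamma_N(z)$ is a centered $m\times m$ array of normalized quadratic forms $\tfrac{1}{N} r_i B_N^{(m)}(z) r_j^\ast$ in the first $m$ rows. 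The covariance kernel is then read off by computing limits of $\tfrac1n \Tr[B_N^{(m)}(z) B_N^{(m)}(w)]$ and $\tfrac1n\sum_j \kappa_4(A_{ij}) [B_N^{(m)}(z)]_{jj}[B_N^{(m)}(w)]_{jj}$, and the Gaussian limit (including the independence across distinct index pairs $(i,j)\neq(k,l)$) follows in one stroke from a CLT for quadratic forms applied to the family $C_N^{s,t} = \sum_l [a_{s,t}^{(l)} \Re B_N^{(m)}(z_l) + b_{s,t}^{(l)} \Im B_N^{(m)}(z_l)]$. This is the PRS/ORS strategy; your approach is the Lytova--Pastur characteristic-function ODE strategy, and the paper explicitly notes these were developed with ``a different set of ideas.''

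What each approach buys: the Schur-complement route isolates the dependence on the first $m$ rows $r_1,\dots,r_m$ from the bulk $B_N^{(m)}(z)$, which is independent of them; the $1/c$ aspect ratio and the factor $z/(z-\eta_{1/c})$ then fall out immediately from Lemma \ref{lemma:commute}, which rewrites $B_N^{(m)}(z)$ as $z$ times a resolvent of the $n\times n$ matrix ${X_N^{(m)}}^\ast X_N^{(m)}$. The independence of the limits of $\Gamma_N(z)_{ij}$ for distinct $(i,j)$ and the exact form of the $\kappa_4(i)$ contribution are built into the quadratic-form CLT with no combinatorial bookkeeping. Your characteristic-function method would in principle reach the same place, but you would effectively have to re-prove the quadratic-form CLT in disguise: the cancellation in your step (a) requires redoing the self-consistent equation inside the expectation $\E[\cdot\, e^{isL_N}]$, the bilinear covariance estimates $\E[R_{ij}(z)R_{kl}(w)]$ in step (b) need to be computed to leading order and with the correct dependence on the index pair, and the assertion in step (d) that the $\kappa_4$ term contributes only on the diagonal deserves a careful argument, not just the observation that off-diagonal $\E R_{ij}$ is small (one also has to rule out contributions from $\V[R_{ij}]$ and from the mixed terms in the derivative of $e^{isL_N}$). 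None of these are fundamental obstructions -- this is the route taken in \cite{LytP} and \cite{LytPastur} for Wigner matrices -- but the paper's decomposition is cleaner precisely because it defers all of this to a black-box CLT for quadratic forms applied to a family of deterministic-in-$r_i$ matrices.

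One small but substantive point you should make explicit in a full write-up: the passage from $\Im z \neq 0$ to real $z$ outside $[0,\sigma^2(1+\sqrt{c})^2]$ is handled in the paper by replacing $B_N^{(m)}$ with $h(X_N X_N^\ast) B_N^{(m)}$ for a compactly supported smooth cutoff $h$ that is identically $1$ on a neighborhood of the limiting spectrum, using Lemma \ref{lemma:spectral-norm} to show the replacement changes nothing with probability tending to one. You gesture at this, but in the characteristic-function setting it requires a bit more care since $L_N$ is defined directly through resolvent entries at real $z$, which may not exist for outlier events; you would need to condition on the good event or use the same cutoff device.
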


In the Hermitian case we have the following.

\begin{theorem} \label{thm:resolvent-clt-complex}
Let $A_N$ be a $N \times n$ random matrix with complex entries that satisifies condition {\bf C1}.  Let $m$ be a fixed positive integer and assume that for $1 \leq i \leq m$
\begin{equation}
	m_4(i) := \lim_{N \rightarrow \infty} \frac{1}{n} \sum_{j} \E|A_{ij}|^4 
\end{equation}
exists and for all $\epsilon>0$ \eqref{eq:lf0.25} holds.  Let
\begin{equation*}
	\kappa_4(i) := m_4(i) - 2 \sigma^4, \ 1 \leq i \leq m.
\end{equation*}
Then the random field $\Psi_N(z)$ converges in finite-dimensional distributions to the random field $\Psi(z) = \sqrt{c} g_{\sigma,c}^2(z) Y(z)$ where
\begin{equation*}
	Y(z) = \left( Y_{ij}(z) \right)_{1 \leq i,j \leq m}
\end{equation*}
is the Gaussian random field such that the entries $Y_{ij}(z)$, $i\leq j$ and $Y_{kl}(w)$, $k \leq l$ are independent when $(i,j) \neq (k,l)$ and 
\begin{align*}
	\cov( \Re Y_{ii}(cz), \Re Y_{ii}(cw) ) &= \kappa_4(i) \Re[z g_{\sigma, 1/c}\left( z \right)] \Re [w g_{\sigma, 1/c}(w)] + \sigma^4 \varphi_{++}\left(z,w\right), \\
	\cov( \Im Y_{ii}(cz), \Im Y_{ii}(cw) ) &= \kappa_4(i) \Im[z g_{\sigma, 1/c}\left( z \right)] \Im [w g_{\sigma, 1/c}\left(w \right)] + \sigma^4 \varphi_{--}\left(z,w\right), \\
	\cov( \Re Y_{ii}(cz), \Im Y_{ii}(cw) ) &= \kappa_4(i) \Re [z g_{\sigma, 1/c}\left( z \right)] \Im [w g_{\sigma, 1/c}\left(w\right)] + \sigma^4 \varphi_{+-}\left(z,w\right), \\
	\cov( \Re Y_{ij}(cz), \Re Y_{ij}(cw) ) &= \frac{1}{2} \sigma^4 \left( \varphi_{++}\left(z,w\right) + \varphi_{--}(z,w) \right) , i \neq j, \\
	\cov( \Im Y_{ij}(cz), \Im Y_{ij}(cw) ) &= \frac{1}{2} \sigma^4 \left( \varphi_{++}\left(z,w\right) + \varphi_{--}(z,w) \right) , i \neq j, \\
	\cov( \Re Y_{ij}(cz), \Im Y_{ij}(cw) ) &= \frac{1}{2} \sigma^4 \left( \varphi_{+-}\left(z,w\right) - \varphi_{+-}(w,z) \right) , i \neq j. 
\end{align*}
\end{theorem}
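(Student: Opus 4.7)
The plan is to adapt the proof of Theorem \ref{thm:resolvent-clt} to the Hermitian case by replacing condition \textbf{C2} with \textbf{C1}; the structural skeleton is identical, and the changes are confined to the Wick--type combinatorics driving the decoupling formula \eqref{decouple}. By the Cram\'er--Wold device it suffices to fix spectral parameters $z_1,\dots,z_p \in \C \setminus [0,\sigma^2(1+\sqrt{c})^2]$, index pairs $(i_k,j_k)$ with $i_k \leq j_k \leq m$, and test numbers $\alpha_k$, and to prove that the linear combination $L_N = \sum_k \alpha_k \Psi_N(z_k)_{i_k j_k}$ converges to a centered complex Gaussian whose variance is the one implied by the stated covariance structure. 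I would work with the characteristic function $\phi_N(t)=\E e^{\mathrm{i} t L_N}$, differentiate in $t$, substitute the resolvent identity $R_N(z)=z^{-1}I+z^{-1}R_N(z)X_NX_N^\ast$ into each factor $R_{i_k j_k}(z_k)$, and then apply \eqref{decouple} separately to the independent variables $\Re (X_N)_{pq}$ and $\Im (X_N)_{pq}$.

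The decisive place where the complex case deviates from the real one is the use of (iii) in Definition \ref{def:C}, namely $\E (A_N)_{ij}^2=0$. This kills the $X_{jl}X_{kl}$ contractions that survive in the real case, leaving only the $X_{jl}\overline{X_{kl}}$ contractions (supported on $j=k$, $l=l'$). The one missing Wick pairing is exactly what downgrades $3\sigma^4$ to $\kappa_4(i)=m_4(i)-2\sigma^4$ and what produces the factors $\frac{1}{2}$ in the off-diagonal covariances $\frac{1}{2}\sigma^4(\varphi_{++}+\varphi_{--})$ and $\frac{1}{2}\sigma^4(\varphi_{+-}(z,w)-\varphi_{+-}(w,z))$: real and imaginary parts of $(A_N)_{pq}$ each carry variance $\sigma^2/2$ and enter symmetrically in the real parts of bilinear forms but antisymmetrically in the mixed real/imaginary ones. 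The remainder terms in the truncated cumulant expansion are controlled just as in Proposition \ref{expandvar}, using \eqref{ineq}, the Lindeberg-type hypothesis \eqref{eq:lf0.25}, and Lemma \ref{lemma:expecation-norm}; variance concentration from \eqref{varij} lets me replace $\tr_N(R_N(z))$ and similar traces by their deterministic limits $g_{\sigma,c_N}(z)$. Collecting the leading terms gives an asymptotic ODE $\phi_N'(t)=-tV\phi_N(t)+o(1)$ whose integration yields the desired Gaussian characteristic function.

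The appearance of the dual parameter $1/c$ in the statement comes from the elementary relation between $\frac{1}{N}A_N A_N^\ast$ and $\frac{1}{N}A_N^\ast A_N$, which share their nonzero spectrum, so that $zg_{\sigma,c}(z)$ and $zg_{\sigma,1/c}(z)$ differ by an explicit affine function of $c$. This identity is used to rewrite the contributions coming from bilinear forms $(R_N(z)X_NX_N^\ast R_N(w))_{ii}$ that are naturally produced by the decoupling expansion into the cleaner form involving $\eta_{1/c}$ and $\varphi_{\pm\pm}$ that appears in the statement. Independence of the entries $Y_{ij}$ and $Y_{kl}$ for $(i,j)\neq(k,l)$ is a separate but parallel verification: the mixed second cumulants produced by pairing $R_{i_k j_k}(z_k)$ with $R_{i_\ell j_\ell}(z_\ell)$ either vanish because of orthogonality of unpaired indices, or acquire an additional factor of $N^{-1}$ and are absorbed in the error.

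The main obstacle is the bookkeeping required to identify the antisymmetric combination $\frac{1}{2}\sigma^4(\varphi_{+-}(z,w)-\varphi_{+-}(w,z))$, which does not appear in the real case: one has to keep track, in the second-order cumulant term, of the interplay between Hermitian conjugation $R_N(z)^\ast=R_N(\bar z)$ and the complex structure \textbf{C1}(iii), so that the pieces which in the real case combined into $\varphi_{+-}(z,w)+\varphi_{+-}(w,z)$ here cancel into a difference. A second delicate point is the localization of the fourth cumulant: the contribution $\kappa_4(i)$ for the diagonal entry $R_{ii}(z)$ must come only from the $i$-th row of $A_N$, so one has to isolate in the decoupling expansion the terms indexed by $k=i$ and invoke \eqref{eq:lf0.25} together with the moment assumption \eqref{eq:k_4} to rule out contributions from other rows.
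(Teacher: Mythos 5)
Your opening sentence says you are ``adapting the proof of Theorem~\ref{thm:resolvent-clt},'' but what you then describe is not the paper's method. The paper's proof of the real-case resolvent CLT (and of the complex case, which the paper declares ``similar'') is not a characteristic-function argument with the decoupling formula; it is a Schur-complement reduction. The $m\times m$ block $R_N^{(m)}(z)$ is written as $\bigl[z I_m - (\frac{1}{N} r_i B_N^{(m)}(z) r_j^\ast)_{i,j=1}^m\bigr]^{-1}$, where $B_N^{(m)}(z)$ depends only on rows $m+1,\dots,N$ of $A_N$. After centering, $\Psi_N(z)=g^2_{\sigma,c_N}(z)\Gamma_N(z)+o(1)$ with $\Gamma_N$ a quadratic form in the first $m$ rows, conditioned on the rest, and the conclusion is a one-step invocation of the CLT for quadratic forms from the appendix of \cite{ors}. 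Lemma~\ref{lemma:commute} turns $B_N^{(m)}$ into the resolvent of $\frac{1}{n}{A_N^{(m)}}^\ast A_N^{(m)}$, which is where $\eta_{1/c}$ and the $\varphi_{\pm\pm}$ structure come from, and the fourth-cumulant term $\kappa_4(i)$ is localized to row $i$ automatically because the quadratic form is $r_i B_N^{(m)} r_i^\ast$. What you propose instead is the Lytova--Pastur route: differentiate $\phi_N(t)=\E e^{itL_N}$, substitute the resolvent identity, and apply the decoupling formula~\eqref{decouple} entrywise to $\Re(X_N)_{pq}$, $\Im(X_N)_{pq}$, aiming for $\phi_N'(t)=-tV\phi_N(t)+o(1)$. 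That is a legitimate alternative and your structural observations about what changes in the complex case are all correct --- applying~\eqref{decouple} to real and imaginary parts separately, the role of $\E(A_N)_{ij}^2=0$ from condition (iii) of Definition~\ref{def:C} in killing the $X_{jl}X_{kl}$ contraction, the resulting $\kappa_4(i)=m_4(i)-2\sigma^4$, the factors $\tfrac12$ in the off-diagonal covariances, and the antisymmetric combination $\varphi_{+-}(z,w)-\varphi_{+-}(w,z)$. But be aware of what you are trading away. The paper's Schur-complement reduction makes the localization of $\kappa_4$ to row $i$ and the appearance of $1/c$ structural facts (they drop out of Lemma~\ref{lemma:commute} and the quadratic-form CLT), whereas in the characteristic-function route you have to extract these by hand from a full cumulant expansion over all $N\times n$ entries, and you lose the natural separation between rows $1,\dots,m$ (which carry $\kappa_4(i)$ and the hypothesis~\eqref{eq:lf0.25}) and rows $m+1,\dots,N$ (which only contribute to the deterministic limit and the Gaussian field). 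The two difficulties you flag at the end --- the antisymmetric cross term and the row localization --- are precisely the places where your route is considerably heavier than the paper's, and where a completed argument would need real work that your sketch does not supply.
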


\begin{remark}
We remind the reader that the covariance values in Theorems \ref{thm:resolvent-clt} and \ref{thm:resolvent-clt-complex} are stated in terms of the Marchenko-Pastur law with ratio index $\frac{1}{c}$ and scale index $\sigma^2$.  In some cases it may be more convenient to state the covariances in terms of the Marchenko-Pastur law with ratio index $c$.  Indeed, a simple computation reveals that for $c>0$ and any continuous function $f$, 
\begin{equation*}
	\E \left[ f(\eta_{1/c}) \right] = \frac{1}{c} \E \left[ f \left( \frac{\eta_c }{c} \right) \right] + \left(1 - \frac{1}{c} \right) f(0)
\end{equation*}
where $\eta_c$ is a Marchenko-Pastur distributed random variable with ratio index $c$ and scale index $\sigma^2$.  In particular, we note that 
\begin{equation}
\label{grelation}
	g_{\sigma, 1/c}(z) = g_{\sigma, c}(cz) + \left( 1 - \frac{1}{c} \right) \frac{1}{z}, 
\end{equation}
\begin{equation}
\label{phirelation}
	\varphi(z,w) = \frac{1}{c} \E \left[ \frac{cz}{cz - \eta_c} \frac{cw}{cw-\eta_c} \right] + \left( 1 - \frac{1}{c} \right).
\end{equation}
\end{remark}

We will need the following lemma for the proof of Theorems \ref{thm:resolvent-clt} and \ref{thm:resolvent-clt-complex}.  

\begin{lemma} \label{lemma:commute}
Let $B$ be an $N \times n$ matrix.  Then
\begin{equation*}
	B^\ast (z-BB^\ast)^{-1}B = B^\ast B(z- B^\ast B)^{-1}
\end{equation*}
for all $z \notin \text{Sp}(BB^\ast)\cup\{0\}$.  
\end{lemma}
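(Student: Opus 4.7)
The plan is to exploit the standard intertwining relation between $BB^\ast$ and $B^\ast B$ induced by $B^\ast$. Observe that the trivial associativity identity $B^\ast(BB^\ast) = (B^\ast B)B^\ast$ rearranges into
\begin{equation*}
(zI_n - B^\ast B) B^\ast = B^\ast (zI_N - BB^\ast),
\end{equation*}
valid for any $z \in \mathbb{C}$. This is the only nontrivial ingredient; everything else is algebraic manipulation and invertibility.

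Next I would use the well-known fact that $BB^\ast$ and $B^\ast B$ share the same nonzero spectrum. Consequently, the hypothesis $z \notin \mathrm{Sp}(BB^\ast) \cup \{0\}$ guarantees that both $zI_N - BB^\ast$ and $zI_n - B^\ast B$ are invertible. I can then multiply the displayed identity on the left by $(zI_n - B^\ast B)^{-1}$ and on the right by $(zI_N - BB^\ast)^{-1}$ to obtain the intertwining of resolvents
\begin{equation*}
B^\ast (zI_N - BB^\ast)^{-1} = (zI_n - B^\ast B)^{-1} B^\ast.
\end{equation*}

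Finally, multiplying on the right by $B$ gives $B^\ast (zI_N - BB^\ast)^{-1} B = (zI_n - B^\ast B)^{-1} B^\ast B$, and since $B^\ast B$ commutes with any polynomial (hence any rational function) of itself, in particular with its own resolvent, the right-hand side equals $B^\ast B (zI_n - B^\ast B)^{-1}$, which completes the proof. There is no real obstacle here; the only point requiring care is the verification that $z \notin \mathrm{Sp}(B^\ast B)$ under the stated hypothesis, which follows from the coincidence of nonzero spectra of $BB^\ast$ and $B^\ast B$.
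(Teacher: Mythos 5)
Your proof is correct and takes a genuinely different route from the paper. The paper establishes the identity first for $|z| > \|BB^\ast\|$ by expanding both resolvents in Neumann series in powers of $1/z$, matching terms, and then extending to the full domain $z \notin \mathrm{Sp}(BB^\ast)\cup\{0\}$ by analytic continuation of the bilinear form $z \mapsto \langle B^\ast(z-BB^\ast)^{-1}Bu, v\rangle$. Your argument instead isolates the algebraic intertwining relation $(zI_n - B^\ast B)B^\ast = B^\ast(zI_N - BB^\ast)$, invokes the coincidence of the nonzero spectra of $BB^\ast$ and $B^\ast B$ to justify inverting both sides at once, and then multiplies through. Your approach is shorter and entirely algebraic: it avoids the detour through a convergence radius and analytic continuation, and it makes explicit the spectral fact that the paper leaves implicit (namely why $zI_n - B^\ast B$ is invertible under the stated hypothesis). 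The paper's power-series method, by contrast, is a template that generalizes easily to identities involving other analytic functions of $BB^\ast$ and $B^\ast B$, which may be why the authors chose it, but for this particular lemma your direct argument is cleaner.
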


\begin{proof}
Choose $z \notin \text{Sp}(BB^\ast)\cup\{0\}$ such that $|z| > \|BB^\ast\|$.  Then we have that
\begin{align*}
	B^\ast (z-BB^\ast)^{-1}B &= B^\ast \frac{1}{z} \left( I + \sum_{k=1}^\infty \frac{1}{z^k} (B B^\ast)^k \right) B \\
		&= B^\ast B \frac{1}{z} \left(I + \sum_{i=1}^\infty \frac{1}{z^k} (B^\ast B)^k \right) \\
		&= B^\ast B(z- B^\ast B)^{-1}.
\end{align*}
We can now extend the result to all $z \notin \text{Sp}(BB^\ast)\cup\{0\}$ by analytic continuation of the function 
\begin{equation*}
	f_{uv}(z) = \langle B^\ast (z-BB^\ast)^{-1}B u, v \rangle
\end{equation*}
where $u,v$ are arbitrary vectors.  
\end{proof}

We present the proof of Theorem \ref{thm:resolvent-clt} below.  The proof in the Hermitian case is similar (see also 
\cite{PRS} and \cite{ors}) and is left to the reader.  

\begin{proof}[Proof of Theorem \ref{thm:resolvent-clt}]

We write
\begin{equation*}
	A_N = \left( \begin{array}{c}
		r_1  \\
		r_2 \\
		\vdots \\
		r_N \end{array} \right), \qquad 
	A_N^{(m)} = \left( \begin{array}{c}
		r_{m+1}  \\
		r_{m+2} \\
		\vdots \\
		r_N \end{array} \right),
\end{equation*}
where $r_i$ is an $n$-vector representing the $i$-th row of $A_N$.  We remind the reader that $X_N = \frac{1}{\sqrt{N}} A_N$ and we will use the 
notation $X_N^{(m)} = \frac{1}{\sqrt{N}} A^{(m)}_N$.  Recall that we denote by $R_N^{(m)}(z)$ the $m \times m$ upper-left corner of the resolvent matrix, 
$R_N(z),$  of $M_N=\frac{1}{N}\*A_N\*A_N^*=X_N\*X_N^*.$

Standard linear algebra gives (see e.g. \cite{ors})
\begin{equation*}
	R_N^{(m)}(z) = \left( z I_m - \left( \frac{1}{N} r_i B_{N}^{(m)}(z) r_j^\ast \right)_{i,j=1}^m \right)^{-1}
\end{equation*}
where 
\begin{equation} \label{eq:B-def}
	B_N^{(m)}(z) = I_n + {X_N^{(m)}}^\ast \left(z-X_N^{(m)} {X_N^{(m)}}^\ast \right)^{-1} X_N^{(m)}.
\end{equation}

Let
\begin{equation*}
	\Gamma_N(z) = \sqrt{N} \left[ \left( \frac{1}{N} r_i B_{N}^{(m)}(z) r_j^\ast \right)_{i,j=1}^m - \sigma^2( c_N -1 + z g_{\sigma, c_N})I_m \right].
\end{equation*}
Then a simple computation reveals that
\begin{equation*}
	R_N^{(m)} = \left[ \frac{1}{g_{\sigma,c_N}(z)} I_m - \frac{1}{\sqrt{N}} \Gamma_N(z) \right]^{-1}.
\end{equation*}

It will follow from the Central Limit Theorem for Quadratic forms (see the appendix of \cite{ors}), that $\| \Gamma_N(z) \|$ is bounded in probability for $z \in \C \setminus \R$.  Thus, we have that
\begin{equation*}
	\Psi_N(z) = g^2_{\sigma, c_N}(z) \Gamma_N(z) + o(1).
\end{equation*}

We note that by \eqref{eq:B-def},
\begin{equation*}
	\frac{1}{N} \Tr B_N^{(m)}(z) = c_N - 1 + \frac{z}{N} \Tr \left(z - X_N^{(m)} {X_N^{(m)}}^\ast \right)^{-1}
\end{equation*}
and
\begin{equation*}
	\sqrt{N} \left[ \frac{1}{N} \Tr (z - X_N^{(m)} {X_N^{(m)}}^\ast)^{-1} - g_{\sigma, c}(z) \right] \longrightarrow 0
\end{equation*}
in probability as $N \rightarrow \infty$ by \eqref{tracevarbound}.  

Thus, we have that
\begin{align*}
	\Gamma_N(z) &= \sqrt{N} \left[ \left( \frac{1}{N} r_i B_{N}^{(m)}(z) r_j^\ast \right)_{i,j=1}^m - \frac{\sigma^2}{N} \Tr B_N^{(m)}(z) I_m \right] + o(1) \\
		&= \sqrt{c_N n} \left[ \left( \frac{1}{n} r_i B_{N}^{(m)}(z) r_j^\ast \right)_{i,j=1}^m - \frac{\sigma^2}{n} \Tr B_N^{(m)}(z) I_m \right] + o(1).  
\end{align*}
By Lemma \ref{lemma:commute} and \eqref{eq:B-def}, we have that
\begin{equation} \label{eq:commute}
	B_N^{(m)}(z) = z(z- {X_N^{(m)}}^\ast X_N^{(m)})^{-1} = \frac{z}{c_N} \left( \frac{z}{c_N} - \frac{1}{n} {A_N^{(m)}}^\ast A_N^{(m)} \right)^{-1}.
\end{equation}

By Theorem \ref{thm:mp} and \eqref{eq:commute}, we note that
\begin{align*}
	\frac{1}{n} \Tr \left[ \Re B_N^{(m)}(z) \Re B_N^{(m)}(w) \right] &\longrightarrow \varphi_{++}\left( \frac{z}{c}, \frac{w}{c} \right),\\
	\frac{1}{n} \Tr \left[ \Im B_N^{(m)}(z) \Im B_N^{(m)}(w) \right] &\longrightarrow \varphi_{--}\left( \frac{z}{c}, \frac{w}{c} \right),\\
	\frac{1}{n} \Tr \left[ \Re B_N^{(m)}(z) \Im B_N^{(m)}(w) \right] &\longrightarrow \varphi_{+-}\left( \frac{z}{c}, \frac{w}{c} \right),
\end{align*}
in probability as $N \to \infty$.  We now claim that for $1 \leq i \leq m$,
\begin{align*}
	\frac{1}{n} \sum_{j=1}^n \kappa_4(A_N)_{ij}\left[ \Re B_N^{(m)}(z) \right]_{jj} \left[ \Re B_N^{(m)}(w) \right]_{jj} &\longrightarrow \kappa_4(i) \Re\left[ \frac{z}{c} g_{\sigma, 1/c}\left(\frac{z}{c} \right) \right] \Re\left[ \frac{w}{c} g_{\sigma, 1/c}\left(\frac{w}{c} \right) \right], \\
	\frac{1}{n} \sum_{j=1}^n \kappa_4(A_N)_{ij}\left[ \Im B_N^{(m)}(z) \right]_{jj} \left[ \Im B_N^{(m)}(w) \right]_{jj} &\longrightarrow \kappa_4(i) \Im\left[ \frac{z}{c} g_{\sigma, 1/c}\left(\frac{z}{c} \right) \right] \Im\left[ \frac{w}{c} g_{\sigma, 1/c}\left(\frac{w}{c} \right) \right], \\
	\frac{1}{n} \sum_{j=1}^n \kappa_4(A_N)_{ij}\left[ \Re B_N^{(m)}(z) \right]_{jj} \left[ \Im B_N^{(m)}(w) \right]_{jj} &\longrightarrow \kappa_4(i) \Re\left[ \frac{z}{c} g_{\sigma, 1/c}\left(\frac{z}{c} \right) \right] \Im\left[ \frac{w}{c} g_{\sigma, 1/c}\left(\frac{w}{c} \right) \right], \\
\end{align*}
in probability as $N \to \infty$.  Indeed, for the first statement, by the triangle inequality
\begin{align*}
	&\left| \left[ \Re B_N^{(m)}(z) \right]_{jj} \left[ \Re B_N^{(m)}(w) \right]_{jj} - \Re\left[ \frac{z}{c} g_{\sigma, 1/c}\left(\frac{z}{c} \right) \right] \Re\left[ \frac{w}{c} g_{\sigma, 1/c}\left(\frac{w}{c} \right) \right] \right| \\
	& \qquad \leq \frac{ |w|}{|\Im w|} \left| \left[ \Re B_N^{(m)}(z) \right]_{jj} - \Re\left[ \frac{z}{c} g_{\sigma, 1/c}\left(\frac{z}{c} \right) \right] \right| \\
	& \qquad \qquad + \frac{|z|}{|\Im z|} \left| \left[ \Re B_N^{(m)}(w) \right]_{jj} - \Re\left[ \frac{w}{c} g_{\sigma, 1/c}\left(\frac{w}{c} \right) \right] \right|.
\end{align*}
By Proposition \ref{expandvar}, we obtain
\begin{equation*}
	 \left[ \Re B_N^{(m)}(z) \right]_{jj} \left[ \Re B_N^{(m)}(w) \right]_{jj} = \Re\left[ \frac{z}{c} g_{\sigma, 1/c}\left(\frac{z}{c} \right) \right] \Re\left[ \frac{w}{c} g_{\sigma, 1/c}\left(\frac{w}{c} \right) \right] + o(1).  
\end{equation*}
The claim is then complete by assumption \eqref{eq:k_4}.  The other two statements follow from the same argument.  

Fix $p \geq 1$ and consider $z_1, \ldots, z_p \in \C \setminus \R$.  We define the family of matrices
\begin{equation*}
	C_N^{s,t} = \sum_{l=1}^p \left[ a_{s,t}^{(l)} \Re B_N^{(m)}(z_l) + b_{s,t}^{(l)} \Im B_N^{(m)}(z_l) \right], 1 \leq s, t \leq m
\end{equation*}
where $a_{s,t}^{(l)}$ and $b_{s,t}^{(l)}$ are arbitrary real constants for $1 \leq s \leq t \leq m, 1 \leq l \leq p$.  We now apply the Central Limit Theorem for Quadratic forms (see the appendix of \cite{ors}) to the family of matrices $C_N^{s,t}$ and use the above computations to conclude that $\Gamma_N(z)$ converges in finite dimensional distributions to $\sqrt{c} Y(z)$ for $\Im z \neq 0$.  

For $z \in \R \setminus [0, \sigma^2 (1 + \sqrt{c})^2]$, define $\delta = \frac{1}{3} \text{dist}(z, [0, \sigma^2 (1 + \sqrt{c})^2])$.  Let $h(x)$ be a smooth function with compact support where 
\begin{align*}
	h(x)&=0  \text{ for } x \notin [-2 \delta, \sigma^2 (1+\sqrt{c})^2 + 2 \delta],\\
	h(x)&=1 \text{ for } x \in [-\delta, \sigma^2 (1+\sqrt{c})^2 + \delta].
\end{align*}
To complete the proof for $z \in \R \setminus [0, \sigma^2 (1 + \sqrt{c})^2]$, we repeat the same arguments as above replacing $B_N^{(m)}$ with $h(X_N X_N^\ast) B_N^{(m)}$, where $h$ is a $C^\infty(\R)$ function with compact support such that 
\begin{equation}
\label{h}
	h(x) := 1 \text{ for } x \in [-\delta, \sigma^2(1 + \sqrt{c})^2 + \delta],
\end{equation}
for some $\delta>0$.  
It is essential here that
\begin{equation*}
	\Prob ( B_N^{(m)} \neq h(X_N X_N^\ast) B_N^{(m)} ) \longrightarrow 0
\end{equation*}
as $N \rightarrow \infty$ by Lemma \ref{lemma:spectral-norm}.  

\end{proof}

\section{Fluctuations of matrix entires for regular functions}

We now prove Theorem \ref{thm:main_real}, Theorem \ref{thm:main_complex} follows similarly.

\begin{proof}

In Theorem \ref{thm:resolvent-clt}, Theorem \ref{thm:main_real} is proved for functions of the form 
\begin{equation}
\label{lincom}
 \sum_{l=1}^k a_l \frac{1}{z_{l}-x} , ~~ z_l \not \in [0, \sigma^2(1+\sqrt{c})^2] ,~~ 1\leq l\leq k .
 \end{equation}
It follows from \eqref{utro11} and \eqref{phirelation} that the limiting variance for functions of the form 
(\ref{lincom}) given in Theorem \ref{thm:resolvent-clt} coincides with the one given in  Theorem \ref{thm:main_real}.
We recall that $\sqrt{N}\* (R_N(z)_{ij} -\delta_{ij} \*g_{\sigma, c_N}(z)), \ 1\leq i,j\leq m, $ converges in finite-dimensional distributions
to the random point field $\sqrt{c}\* g^2_{\sigma, c}(z)\* Y_{ij}(z), \ 1\leq i,j\leq m.$
For the off-diagonal entries $i\not=j$, one has
\begin{equation}
\begin{split}
&g_{\sigma,c}^2(z) g_{\sigma,c}^2(w) Cov(Y_{ij}(z),Y_{ij}(w)) = g_{\sigma,c}^2(z) g_{\sigma,c}^2(w) \sigma^4 \varphi \left(\frac{z}{c},\frac{w}{c} \right)\\
&= -\sigma^4g_{\sigma,c}^2(z) g_{\sigma,c}^2(w) \frac{zw(g_{\sigma,c}(z)- g_{\sigma,c}(w))}{c(z-w)} - \sigma^4 \frac{1-c}{c}  g_{\sigma,c}^2(z) g_{\sigma,c}^2(w)\\
 &=-\sigma^4g_{\sigma,c}(z) g_{\sigma,c}(w)\frac{1}{c}\left( (c-1) g_{\sigma,c}(w)g_{\sigma,c}(z) +\frac{z g_{\sigma,c}(z) - w g_{\sigma,c} (w)  }{\sigma^2(z-w)} \right) \\
&=-\sigma^4\frac{1}{c\sigma^4}g_{\sigma,c}(z)g_{\sigma,c}(w)  -\sigma^4 \frac{ g_{\sigma,c}(z)-g_{\sigma,c}(w)   }{c\sigma^4(z-w)}\\
&=\frac{1}{c } Cov\left(\frac{1}{z-\eta},\frac{1}{w-\eta}  \right).
\end{split}
\end{equation}
The calculations in the diagonal case $i=j$ are similar.  To verify that the fourth cumulant term in
$ Cov(Y_{ii}(z),Y_{ii}(w))$ gives the required contribution \eqref{def:rho}, one uses the identity
\begin{equation}
\begin{split}
&\frac{z}{c} g_{\sigma,c}^2(z) g_{\sigma,1/c}\left(\frac{z}{c}\right) 
=\frac{1}{c\sigma^2} \left( zg_{\sigma,c}^2(z)- g_{\sigma,c} \right)       \\
&=\frac{1}{c\sigma^2} \left(   \frac{(z-\sigma^2 (c-1))g_{\sigma,c}(z)-1}{\sigma^2}  - g_{\sigma,c} \right) 
=\frac{1}{c\sigma^2} \* \frac{(z-c\*\sigma^2)g_{\sigma,c}(z) -1}{\sigma^2}. 
\end{split}
\end{equation}
Thus, Theorem \ref{thm:main_real} is proved for functions of the form 
\eqref{lincom}. By Lemma \ref{lemma:spectral-norm}, the result then also follows for test functions of the form
\begin{equation}
\label{lincom11}
 \sum_{l=1}^k a_l \* h_l(x) \*\frac{1}{z_{l}-x} , ~~ z_l \not \in [0, \sigma^2(1+\sqrt{c})^2] ,~~ 1\leq l\leq k,
 \end{equation}
where $h_l, \ 1\leq l \leq k, $ are $C^\infty(\R)$ functions with compact support that satisfy (\ref{h}).

 By the Stone-Weierstrass theorem (see e.g \cite{RS}), functions (\ref{lincom}) are dense in $C^p(X)$, for any compact set $X \subset \R$ and 
$p=0,1,2,\ldots.$
Let $f$ be a $C^\infty(\R)$ function such that $supp(f) \subset [-L,L]$ for some $L>0$. Then there exists a sequence of functions $f_j$ of the form
\eqref{lincom11} that converge to $f$ in the $C^4[-L, L]$ norm (for the definition of the $C^4[-L, L]$ norm see (\ref{Cknorm})). Then, $f_j$ also 
converges to $f$ in the $\mathcal{H}_s$ norm for $s\leq 4.$  
By the estimate \eqref{varijf}
\begin{equation}
\V[ \sqrt{N}(  f(X_N)_{ij} - f_k(X_N)_{ij} ) ] \leq const \* \|f(x) - f_k(x)\|_s.
\end{equation}
Since the r.h.s. can be made arbitrarily small,  and $\omega^2(f), \ \rho^2(f)$ in  (\ref{def:omega}), (\ref{def:rho}) are continuous in 
the $\mathcal{H}_s$ norm, we obtain that
$ \sqrt{N}(f(X_N)_{ij} - \E[f(X_N)_{ij}])$ converges in distribution to the Gaussian random variable defined in Theorem \ref{thm:main_real}. 
Since smooth functions with compact support are dense in $\mathcal{H}_s,$ the result can be extended to an arbitrary $f \in \mathcal{H}_s.$
\end{proof}

\end{document}